\documentclass[framed]{customarticle}

\title{Stochastic Krasnoselskii-Mann Iterations: \\ Convergence without Uniformly Bounded Variance}
\author{
Daniel Cortild$^1$ \orcidCortild (\texttt{daniel.cortild@maths.ox.ac.uk})\\
Coralia Cartis$^1$ \orcidCartis (\texttt{coralia.cartis@maths.ox.ac.uk})\\
Juan Peypouquet$^2$ \orcidPeypouquet (\texttt{j.g.peypouquet@rug.nl}) \\ \\
$^1$Mathematical Institute, University of Oxford, Oxford, United Kingdom\\
$^2$Bernoulli Institute, University of Groningen, Groningen, Netherlands
}
\date{Last Compiled: \today}

\begin{document}

\maketitle

\begin{abstract}
    \noindent We investigate the Stochastic Krasnoselskii-Mann iterations for expected nonexpansive fixed-point problems in a real separable Hilbert space. We establish convergence guarantees under significantly weaker assumptions on the variance than those typically used in the literature. In particular, instead of a uniform bound on the variance of the stochastic oracle, we only assume finite variance at a single fixed point. Under this assumption, we prove almost sure weak convergence of the iterates, derive convergence rates for the expected residual and the last-iterate residual, and obtain almost sure convergence rates for the running minimum residual. Notably, we recover the best-known stochastic oracle complexity without imposing uniformly bounded variance. We illustrate the applicability of our results to Stochastic Gradient Descent, where we recover known guarantees, to Stochastic Three-Operator Splitting and Stochastic  Backward-Forward Splitting, for which we obtain the first results that avoid uniform variance bounds, and to a novel Stochastic Lifted Three-Operator Splitting.

    \vspace{1em}\noindent\textbf{Keywords.} Krasnoselskii-Mann Iterations, Variance-at-Solution Assumption, Stochastic Gradient Descent, Stochastic Three-Operator Splitting, Stochastic Backward-Forward Splitting. 

    \vspace{1em}\noindent\textbf{Mathematics Subject Classification (2000)} 46N10, 47J26, 65K10, 90C25, 90C15.
\end{abstract}
\section{Introduction}

Fixed-point problems are central in optimization and variational analysis, providing a unified and systematic approach to solve nonlinear and operator equations. In this work, we consider the fixed point problem 
\begin{equation*}
    \text{Find $p\in \Fix(T)$},
\end{equation*}
where $T\colon \H\to \H$ is a nonexpansive mapping on a real separable Hilbert space $\H$, meaning that $\|Tx-Ty\|\le \|x-y\|$ for all $x, y\in \H$, where $\H$ has associated inner product $\langle \cdot, \cdot \rangle$ and induced norm $\|\cdot\|$, and $\Fix(T)=\{x\in \mathcal H\colon Tx=x\}$ denotes its set of fixed points. A standard algorithm for tackling such problems is the Krasnoselskii-Mann iterations \citep{mann_mean_1953,krasnoselskii_two_1955}, which are given by
\begin{equation}\tag{KM}
    x_{k+1}=(1-\lambda_k)x_k+\lambda_kTx_k\quad \text{for $k\ge 0$},
\end{equation}
where $x_0\in \H$ and $(\lambda_k)\subset (0, 1)$ is a sequence of \textit{relaxation parameters}. This was introduced independently in \cite{mann_mean_1953} for $\lambda_k=\frac{1}{k+1}$ and in \cite{krasnoselskii_two_1955} for $\lambda_k\equiv 1/2$. Weak convergence of the iterates was later established in \cite{schaefer_uber_1957} for constant parameters and for variable parameters satisfying $\sum_{k=0}^\infty \lambda_k(1-\lambda_k)=+\infty$ in \cite{groetsch_note_1972}. For a recent overview of the method, we refer to \cite{dong_krasnoselskiimann_2022,cortild_krasnoselskii_2025} and the references therein.

A modern challenge resides within the computability of the operator $T$. It is in general computationally prohibitive to evaluate the operator at each iteration. As such, we assume access to the operator only through a stochastic oracle $T_\xi$. We assume this estimator to be unbiased, namely that $T=\ExpD{\xi\sim \D}{T_\xi}$, where $\D$ is a probability distribution parameterizing the operators, such that the problem becomes
\[
    \text{Find $p\in \Fix(T)=\Fix(\ExpD{\xi}{T_\xi})$}.
\]
To this end, we study a stochastic variant of the standard Krasnoselskii-Mann iterations, given in Algorithm \ref{alg:SKM}. As in stochastic optimization, the stochasticity arises from sampling one operator at each iteration, rather than evaluating the expected operator. Stochastic Krasnoselskii-Mann iterations were previously studied in a block-coordinate framework with additive stochastic errors in \cite{combettes_stochastic_2015,combettes_stochastic_2019}.

\begin{algorithm}[H]
\renewcommand{\thealgorithm}{SKM}
\caption{(Stochastic Krasnoselskii-Mann)}\label{alg:SKM}
\begin{algorithmic}
\Require $T_\xi\colon \H\to \H$ nonexpansive with $\ExpD{\xi}{T_\xi}=T$, Relaxation parameters $\lambda_k\in (0, 1)$.
\For{$k=0, \ldots, K-1$}
    \State Draw $\xi_k\sim \mathcal D$ randomly and independently
    \State Update $x_{k+1}=(1-\lambda_k)x_k+\lambda_kT_{\xi_k}(x_k)$
\EndFor \\
\Return $x_K$
\end{algorithmic}
\end{algorithm}

As is standard for fixed-point methods, we assume the operator $T_\xi$ to be nonexpansive for $\mathcal D$-almost all $\xi$, and the set of fixed points of $T$, denoted by $\Fix(T)$, to be non-empty.

When dealing with stochastic algorithms, one typically needs to make an assumption about the estimator $T_\xi$. The usual one in the literature \cite{bravo_stochastic_2024,bravo_stochastic_2026,iiduka_minibatch_2026} is to assume that the variance of the stochastic oracle is uniformly bounded. Specifically, it is typically assumed that there exists a $\sigma^2$ such that, for all $x\in \H$, 
\[
    \Exp{\|(T-T_\xi)x\|^2}\le \sigma^2<+\infty.
\]
In the functional case, this has been shown to be unverifiable or unrealistic in practice \cite{bottou_optimization_2018}, which creates a significant gap between theoretical guarantees and practical applicability. The literature therefore has started avoiding making this assumption \cite{bach_nonasymptotic_2011,needell_stochastic_2016,nguyen_sgd_2018,gower_sgd_2019,gower_sgd_2021,khaled_better_2023,cortild_biasoptimal_2025,garrigos_lastiterate_2025}.
This naturally raises the question of whether the above assumption can be relaxed in the fixed point setting while still preserving meaningful guarantees. However, weakening the variance assumptions appears to be nontrivial, as existing analyses heavily rely on uniform variance bounds. 

\textbf{Motivation.} The motivation for this study arose from recent work on differentially private empirical risk minimization \cite{cortild_quadratic_2026}. The setting can naturally be written as the sum of three operators, thereby inviting the natural algorithm of \textit{Stochastic Three-Operator Splitting}. However, existing analyses mostly rely on a uniform variance bound, which did not hold in the studied setting. Instead, the natural assumption was finite variance at a solution, an assumption that has become standard in recent stochastic optimization literature. This observation led us to investigate general stochastic fixed-point iterations under such relaxed noise assumptions. 

\textbf{Contributions.} We provide a convergence analysis of Algorithm \ref{alg:SKM} in a real Hilbert space setting.
Our contributions are as follows.
\begin{enumerate}
    \item We introduce a variance-at-solution assumption, requiring a bounded variance at a single solution $p\in \Fix(T)$, namely that 
    \[
        \Exp{\|(T-T_\xi)p\|^2}\le \sigma_*^2<+\infty.
    \]
    Our key insight is that, by exploiting nonexpansiveness, this local variance assumption propagates to all fixed points and yields a global variance control.
    \item Under this assumption, we establish almost sure convergence of the residuals and weak almost sure convergence of the iterates to a point in $\Fix(T)$. 
    \item We derive convergence rates on the expected random residual and the expected last-iterate residual, and recover the best-known stochastic oracle complexities $\mathcal O(\varepsilon^{-4})$ and $\mathcal O(\varepsilon^{-6})$ respectively, previously established under uniformly bounded variance \cite{bravo_stochastic_2024}. We moreover obtain the same complexity rate almost surely on the running minimum residual, up to logarithmic factors. We moreover derive results for horizon-independent step-sizes, matching the horizon-dependent complexities, up to logarithmic factors.
    \item We apply our results to Stochastic Gradient Descent, recovering known guarantees, and to Stochastic Three-Operator Splitting and Stochastic Backward-Forward Splitting, where we obtain the first results that do not make use of uniformly bounded variance assumptions.
\end{enumerate}
These contributions demonstrate that uniformly bounded variance is unnecessarily strong, and can be relaxed to a weaker, local variance assumption without loss of convergence or complexity guarantees.

\textbf{Comparison with Previous Works.} 
Earlier works studied stochastically perturbed Krasnoselskii-Mann iterations in block-coordinate and additive-error frameworks. Combettes and Pesquet \cite{combettes_stochastic_2015} establish almost sure weak convergence results. In a sequel work \cite{combettes_stochastic_2019}, they obtain mean-square and linear convergence, under additional contractive-type assumptions. More recently, Combettes and Madariaga \cite{combettes_geometric_2026} develop a general geometric framework covering randomly relaxed Krasnoselskii-Mann iterations with stochastic additive errors. The stochastic noise assumptions in these works do not cover the variance-at-solution assumption presented in this work. Moreover, these works do not provide comparable finite-time residual bounds in the general nonexpansive setting.

Our work is therefore most closely related, in terms of residual complexity, to the following works.
\begin{itemize}
    \item The closest prior work is due to Bravo and Cominetti \cite{bravo_stochastic_2024}, who study Stochastic Krasnoselskii-Mann iterations in normed spaces. They consider a framework with additive noise and allow martingale-difference perturbations whose variance may grow in a controlled manner, and obtain convergence of the expected residual and almost sure weak convergence of the iterates. The most directly comparable explicit complexity bounds are obtained under uniformly bounded variance. In that regime, they prove a $\mathcal O(\varepsilon^{-6})$ complexity result for the last-iterate residual, and a $\mathcal O(\varepsilon^{-4})$ complexity result for a random iterate in the Euclidean setting, for horizon-dependent step-sizes. For horizon-independent step-sizes, they obtain a residual convergence rate of $\tilde{\mathcal O}(K^{-1/4})$. We recover their results under weaker assumptions.
    
    \item Bravo and Contreras \cite{bravo_stochastic_2026} study Stochastic Halpern iterations in normed spaces, again under the stronger uniformly bounded variance assumption. Their method uses increasing minibatches and yields an $\mathcal O(\varepsilon^{-5})$ oracle complexity on the last-iterate residual.
    
    \item Finally, we were made aware of the work by Iiduka \cite{iiduka_minibatch_2026} during the writing of this manuscript, and note that our work was developed independently. They study Stochastic Krasnoselskii-Mann iterations, still under uniformly bounded variance, and obtain convergence in $\mathcal O(\varepsilon^{-4})$ stochastic oracle calls. They show convergence of the expected residuals and establish convergence of the iterates in finite dimensions, and do not allow for our weaker assumption.
\end{itemize}

The key distinctions are summarized in Table \ref{tab:comparison}.

\begin{table}[H]
    \centering
    \caption{Comparison of finite-horizon complexities to previous works. For ``Assumption'', $\sigma^2<+\infty$ indicates uniformly bounded variance, whereas $\sigma_*^2<+\infty$ indicates bounded variance at a fixed point. ``Oracle Calls'' denotes total number of calls to the stochastic oracle.}
    \begin{threeparttable}
        \begin{tabular}{|c|c|c|C{1.6cm}|C{2.2cm}|C{1cm}|c|}
            \hline
            \textbf{Reference} & \textbf{Algorithm} & \textbf{Assumption} & \textbf{Batch Size}  &\textbf{Oracle Complexity} & \textbf{A.S. Rate?} & \textbf{Metric} \\ 
            \hline
            \cite{bravo_stochastic_2024} & SKM &  $\sigma^2<+\infty$\tnote{1} & Constant & $\mathcal O(\varepsilon^{-6})$ & No & Last-Iterate\tnote{2} \\
    
            \cite{bravo_stochastic_2024} & SKM & $\sigma^2<+\infty$\tnote{1} & Constant & ${\mathcal O}(\varepsilon^{-4})$ & No  & Random \\ 
            
            \cite{bravo_stochastic_2026} & SHalpern & $\sigma^2<+\infty$ & Increasing & $\tilde {\mathcal O}(\varepsilon^{-5})$ & No  & Last-Iterate \\
    
            \cite{iiduka_minibatch_2026} & SKM & $\sigma^2<+\infty$ & Increasing & ${\mathcal O}(\varepsilon^{-4})$ & No  & Running Min \\
            
            Thm \ref{thm:residual} & SKM & $\sigma_*^2<+\infty$ & Constant & $\mathcal O(\varepsilon^{-4})$ & No  & Random \\ 
            Thm \ref{thm:residual} & SKM & $\sigma_*^2<+\infty$ & Constant & $\tilde o(\varepsilon^{-4})$ & Yes  & Running Min \\ 
            Thm \ref{thm:last} & SKM & $\sigma_*^2<+\infty$ & Constant & $\mathcal O(\varepsilon^{-6})$ & No & Last-Iterate \\ 
            \hline
        \end{tabular}
        \begin{tablenotes}[flushleft]
            \footnotesize
            \item[1] They study Martingale Difference Noise, but only obtain explicit rates in the special case of uniformly bounded variance.
            \item[2] Requires the additional assumption that the range of $T$ is bounded.
        \end{tablenotes}
    \end{threeparttable}
    \label{tab:comparison}
\end{table}

In the quasi-contractive setting, the work by Chen et al.~\cite{chen_finitesample_2020} provides a $\mathcal O(K^{-1/2})$ rate for the distance to the unique fixed-point, under the assumption that the second moment of the noise grows at most affinely with the square norm of the current iterate. Their assumption is much weaker than uniformly bounded variance, and is implied by ours.

We finally note the recent work \cite{combettes_almost_2026}, which discusses last-iterate residual rates with guarantees in an almost-sure setting, albeit under a different variance assumption.

\textbf{Structure.} We present our assumptions, main results and a numerical illustration in Section \ref{sec:results}. In Section \ref{sec:apps}, we apply our results to Stochastic Gradient Descent, Stochastic Three-Operator Splitting, Stochastic Backward-Forward, and Stochastic Lifted Three-Operator Splitting. Auxiliary results are gathered in Appendix \ref{sec:aux}.

\section{Convergence Results}\label{sec:results}

In this section, we present our main convergence results for Algorithm \ref{alg:SKM}. We first introduce our standing assumptions, including our variance-at-solution assumption, and establish key lemmas. We then derive convergence guarantees for the iterates and the expected residual. Finally, we present a simple numerical illustration.

\subsection{Assumptions}\label{sec:assumptions}

We begin by introducing the assumptions used throughout the analysis. The first assumption is on the problem at hand.

\begin{assumption}[Problem Assumption]\label{ass:nonexp}
    We assume the operators $T_\xi$ are nonexpansive for $\mathcal D$-almost all $\xi$. We further assume that $\xi\mapsto T_\xi x$ is measurable and Bochner integrable for each $x\in \H$ and define $T\colon \H\to\H$ by $x\mapsto\ExpD{\xi\sim \mathcal D}{T_\xi x}$. Moreover, we assume $\Fix(T)$ to be nonempty.
\end{assumption}

We note that under the Problem Assumption \ref{ass:nonexp}, it immediately holds that $T$ is nonexpansive. 

The second assumption treats the stochasticity in the problem. Our key contribution is that we do not require a uniformly bounded variance, as done in prior works, but merely a bounded variance-at-solution assumption.

\begin{assumption}[Variance Assumption]\label{ass:bounded_some_FP}
    We assume that, for some $p\in \Fix(T)$,
    \[
        \Exp{\|(T_\xi-T)p\|^2}\le \sigma_*^2<+\infty.
    \]
\end{assumption}

The next lemma shows that if the quantity in the Variance Assumption \ref{ass:bounded_some_FP} is bounded for some $p\in \Fix(T)$, then the quantity is constant over all $\Fix(T)$, and hence bounded for all $p\in \Fix(T)$. This parallels the corresponding result in the function setting explored in \cite[Lemma 4.17]{garrigos_handbook_2024}.

\begin{lemma}[Bounded Variance at All Fixed Points]\label{lem:bounded_all_FP}
    Under the Problem Assumption \ref{ass:nonexp} and the Variance Assumption \ref{ass:bounded_some_FP}, it holds that, for all $p\in\Fix(T)$,
    \[
        \Exp{\|(T_\xi-T)p\|^2}\le \sigma_*^2.
    \]
\end{lemma}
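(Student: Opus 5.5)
The plan is to prove something stronger than the stated bound: for any two fixed points $p,q\in\Fix(T)$, the random vectors $(T_\xi-T)p$ and $(T_\xi-T)q$ coincide for $\mathcal D$-almost all $\xi$. The lemma then follows at once. Indeed, if $p$ is the point given by the Variance Assumption \ref{ass:bounded_some_FP} and $q\in\Fix(T)$ is arbitrary, then $\Exp{\|(T_\xi-T)q\|^2}=\Exp{\|(T_\xi-T)p\|^2}\le\sigma_*^2$. This mirrors the functional case, where stochastic gradients are almost surely constant on the solution set.

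Fix $p,q\in\Fix(T)$ and set $D_\xi := T_\xi p - T_\xi q$.

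\begin{enumerate}
\item \emph{Moments of $D_\xi$.} By nonexpansiveness of $T_\xi$ (Assumption \ref{ass:nonexp}), $\|D_\xi\|\le\|p-q\|$ almost surely. Hence $D_\xi$ is Bochner integrable with a finite second moment. By unbiasedness and since $p,q$ are fixed points,
\[
\Exp{D_\xi}=Tp-Tq=p-q .
\]

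\item \emph{Bias--variance identity.} Because $D_\xi$ has a finite second moment, expanding the square and using $\Exp{\langle D_\xi,p-q\rangle}=\langle\Exp{D_\xi},p-q\rangle$ gives
\[
\Exp{\|D_\xi-(p-q)\|^2}=\Exp{\|D_\xi\|^2}-\|p-q\|^2 .
\]

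\item \emph{Conclude $D_\xi=p-q$ almost surely.} Step 1 gives $\Exp{\|D_\xi\|^2}\le\|p-q\|^2$, so the right-hand side above is $\le 0$. The left-hand side is nonnegative, so both vanish. Therefore $T_\xi p - T_\xi q = p-q$ almost surely.

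\item \emph{Rearrange.} The identity from Step 3 reads $T_\xi p - p = T_\xi q - q$ almost surely. Since $Tp=p$ and $Tq=q$, this is exactly $(T_\xi-T)p=(T_\xi-T)q$ almost surely, and taking expectations of squared norms finishes the proof.
\end{enumerate}

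I expect no real obstacle. The only care needed is measure-theoretic: the bias--variance expansion in Step 2 must be justified in the Hilbert-space (Bochner) setting. This is legitimate because $\|D_\xi\|$ is bounded, and the inner product with a fixed vector commutes with the Bochner expectation. Note also that the argument never uses $\sigma_*^2<\infty$ at $q$ beforehand. Finiteness at $q$ is inherited from $p$ through the almost sure equality.
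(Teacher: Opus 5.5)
Your proof is correct and takes essentially the same approach as the paper. The paper squeezes $\Exp{\|T_\xi p-T_\xi p'\|^2}$ between $\|p-p'\|^2$ from below (Jensen) and from above (nonexpansiveness), infers that $T_\xi p-T_\xi p'$ has zero variance, and concludes that $(T_\xi-T)p=(T_\xi-T)p'$ almost surely — this is exactly your bias--variance identity written out explicitly, followed by the same conclusion.
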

\begin{proof}
    Consider any two fixed points $p, p'\in \Fix(T)$. As such, by convexity of $x\mapsto \|x\|^2$, it holds that
    \[
        \|p-p'\|^2=\|\Exp{T_\xi p}-\Exp{T_\xi p'}\|^2\le \Exp{\|T_{\xi}p-T_{\xi}p'\|^2}\le \|p-p'\|^2.
    \]
    Therefore equality holds, namely $\|\Exp{T_\xi p-T_\xi p'}\|^2=\Exp{\|T_\xi p-T_\xi p'\|^2}$, and thus $T_\xi p-T_\xi p'$ has zero variance and is almost surely constant. In particular, it is equal to its expectation, namely $T_\xi p-T_\xi p'=Tp-Tp'$ almost surely, which implies that $(T_\xi-T)p=(T_\xi-T)p'$ for all $p, p'\in \Fix(T)$ almost surely, and $p\mapsto \Exp{\|(T_\xi-T)p\|^2}$ is constant over $\Fix(T)$, as claimed.
\end{proof}

Moreover, if the quantity in Assumption \ref{ass:bounded_some_FP} is bounded for some $p\in \Fix(T)$, then we may bound it for all $x\in \mathcal H$, albeit with a non-constant right-hand side. We refer to this as a variance transfer result. The below generalizes the functional setting \cite[Lemmas 4.19 and 4.20]{garrigos_handbook_2024}.

\begin{lemma}[Variance Transfer Lemma]\label{lem:transfer}
    Under the Problem Assumption \ref{ass:nonexp} and the Variance Assumption \ref{ass:bounded_some_FP}, it holds that, for all $x\in \H$ and $p\in \Fix(T)$,
    \[
        \Exp{\|(T_\xi-T)x\|^2}\le 2\sigma_*^2+2\|x-p\|^2.
    \]
\end{lemma}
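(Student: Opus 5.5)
We split the centered oracle at $x$ into its value at the fixed point $p$ plus a difference term that nonexpansiveness controls. Fix $x\in\H$ and $p\in\Fix(T)$, and write
\[
    (T_\xi-T)x = (T_\xi-T)p + \bigl[(T_\xi x - T_\xi p) - (Tx-Tp)\bigr].
\]
Applying $\|a+b\|^2\le 2\|a\|^2+2\|b\|^2$ and taking expectations gives
\[
    \Exp{\|(T_\xi-T)x\|^2}\le 2\Exp{\|(T_\xi-T)p\|^2} + 2\Exp{\|(T_\xi x-T_\xi p)-\Exp{T_\xi x-T_\xi p}\|^2}.
\]
This uses $Tx-Tp=\Exp{T_\xi x - T_\xi p}$, which holds by linearity of the Bochner integral under the Problem Assumption \ref{ass:nonexp}.

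The first term is at most $2\sigma_*^2$. Here we invoke Lemma \ref{lem:bounded_all_FP}, because the statement allows $p$ to be any point of $\Fix(T)$, not just the one appearing in the Variance Assumption \ref{ass:bounded_some_FP}.

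For the second term, we use that the variance of a Hilbert-valued random variable is at most its second moment: $\Exp{\|Y-\Exp{Y}\|^2}=\Exp{\|Y\|^2}-\|\Exp{Y}\|^2\le\Exp{\|Y\|^2}$. We apply this with $Y=T_\xi x-T_\xi p$. Since $T_\xi$ is nonexpansive for almost all $\xi$, we have $\|Y\|\le\|x-p\|$ almost surely, so the second term is at most $2\|x-p\|^2$. Adding the two bounds gives the claim.

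The only delicate point is integrability. We need $\Exp{\|T_\xi x - T_\xi p\|^2}<\infty$ for the variance identity to make sense, and this follows from the almost sure bound $\|Y\|\le\|x-p\|$. Combined with $\Exp{\|(T_\xi-T)p\|^2}<\infty$, it also shows that $(T_\xi-T)x$ is square integrable. The rest is routine.
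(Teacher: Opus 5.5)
Your proof is correct and is essentially the paper's argument: the same splitting of $(T_\xi-T)x$ around $p$, the same bound $\Exp{\|T_\xi x-T_\xi p\|^2}-\|Tx-Tp\|^2\le\|x-p\|^2$ on the centered difference via nonexpansiveness, and the same appeal to Lemma~\ref{lem:bounded_all_FP} to cover every $p\in\Fix(T)$. The only cosmetic difference is that you apply $\|a+b\|^2\le 2\|a\|^2+2\|b\|^2$ pointwise before taking expectations, whereas the paper uses the triangle inequality in $L^2$ and squares at the end; both give the same constants.
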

\begin{proof}
    We have, by the triangle inequality and the Variance Assumption \ref{ass:bounded_some_FP},
    \begin{align*}
        \sqrt{\Exp{\|(T_\xi-T)x\|^2}}
        &\le \sqrt{\Exp{\|(T_\xi-T)p\|^2}}+\sqrt{\Exp{\|T_\xi x-T_\xi p-(Tx-Tp)\|^2}} \\
        &\le \sigma_*+\sqrt{\Exp{\|T_\xi x-T_\xi p-(Tx-Tp)\|^2}} \\
        & = \sigma_* + \sqrt{\Exp{\|T_\xi x-T_\xi p\|^2-2\langle T_\xi x-T_\xi p,Tx-Tp\rangle+\|Tx-Tp\|^2}}\\
        & = \sigma_* + \sqrt{\Exp{\|T_\xi x-T_\xi p\|^2}-\|Tx-Tp\|^2} \\
        & \le \sigma_* + \|x-p\|,
    \end{align*}
    where we discarded the negative term and used the nonexpansiveness of $T_\xi$ in the last inequality. The result follows by the standard inequality $(a+b)^2\le 2a^2+2b^2$. We note this result holds for all $p\in \Fix(T)$, and not only the one in the Variance Assumption \ref{ass:bounded_some_FP}, by Lemma \ref{lem:bounded_all_FP}.
\end{proof}

\begin{remark} \label{rem:Young}
    In Lemma \ref{lem:transfer}, one can apply Young's inequality more generally to deduce that 
    \begin{align*}
        \Exp{\|(T_\xi-T)x\|^2}
        &\le (1+\eta^{-1})\sigma_*^2+(1+\eta)\|x-p\|^2,
    \end{align*}
    for any $\eta>0$. In the case of interpolation, where $\sigma_*^2=0$, the first term disappears, and we therefore deduce that 
    $$\Exp{\|(T_\xi-T)x\|^2} \le \|x-p\|^2.$$
    This results in a reduction of the bias terms in Theorems \ref{thm:residual} and \ref{thm:last}. Although we are not aiming at optimizing the constants in these expressions, this fact is interesting for the numerical comparison in Section \ref{ssec:numerical}.  
\end{remark}

\subsection{Convergence Results}

We let $\mathcal F_k=\sigma(x_0, \xi_0, \xi_1, \ldots, \xi_{k-1})$ be a filtration adapted to the iterate $x_k$. In particular, $\xi_k$ is independent of $\mathcal F_k$. For simplicity of notation, we write $\E_k$ as $\Exp{\cdot|\mathcal F_k}$ and $\E$ as a total expectation.

To establish convergence, we impose assumptions on the step-size sequence. We denote by $\ell^p(\N)$ all sequences $(\lambda_k)_{k\in \N}\subset \R$ such that $\sum_{k=0}^\infty |\lambda_k|^p<+\infty$, where $p\geq 1$.
\begin{assumption}[Parameter Assumption]\label{ass:stepsize}
    The relaxation parameters $(\lambda_k)\subset (0,1)$ satisfy $(\lambda_k)\in \ell^2(\N)\backslash \ell^1(\N)$. 
\end{assumption}

We first analyze the setting where the realizations $T_\xi$ are nonexpansive, namely that $\|T_\xi x-T_\xi y\|\le \|x-y\|$ for all $x, y\in \H$, and later consider the setting where, additionally, $T$ is $q$-quasi-contractive for $q\in (0, 1)$, namely that $\|Tx-p\|\le q\|x-p\|$ for all $x\in H$ and $p\in \Fix(T)$.

\subsubsection{Nonexpansive Setting}

We define the following quantity for notational convenience
\[
    \Lambda_k=\prod_{i=0}^{k-1}(1+2\lambda_i^2)^{-1}.
\]
We note that $\Lambda_K\ge \bar\Lambda$ for some $\bar\Lambda>0$ under the assumption that $(\lambda_k)\in \ell^2(\N)$. In fact, since $\ln(1+x)\le x$ for all $x>-1$,
\begin{equation}\label{eq:bound_Lambda}
    \Lambda_K^{-1}=\prod_{k=0}^{K-1}(1+2\lambda_k^2)=\exp\left(\sum_{k=0}^{K-1}\ln(1+2\lambda_k^2)\right)\le \exp\left(2\sum_{k=0}^{K-1} \lambda_k^2\right),
\end{equation}
so we may pick $\bar\Lambda^{-1}=\exp\left(2\sum_{k=0}^{\infty} \lambda_k^2\right)<+\infty$.

We also introduce a random index $N_K$, independent of the algorithmic randomness, defined by
\begin{equation}\label{eq:def_prob}
    \P(N_K=k)=\frac{\Lambda_{k+1}\lambda_k(1-\lambda_k)}{\sum_{k'=0}^{K-1}\Lambda_{k'+1}\lambda_{k'}(1-\lambda_{k'})}\quad \text{for $k=0, \ldots, K-1$}.
\end{equation}
This index allows us to convert weighted averages of the residual into guarantees on a single (random) iterate.

\begin{remark}\label{rem:decreasing_relaxation_parameters}
    The Parameter Assumption \ref{ass:stepsize} is naturally satisfied for step-sizes $\lambda_k = \lambda_0 (k+1)^{-a}$ for $a\in (1/2, 1]$ and $\lambda_0\in (0, 1)$, since
    \[
        \sum_{k=0}^{K-1}(k+1)^{-\alpha}=\begin{cases}
            \Theta(K^{1-\alpha})\quad &\text{if $\alpha<1$}, \\
            \Theta(\ln(K))\quad &\text{if $\alpha=1$}, \\
            \Theta(1)\quad &\text{if $\alpha>1$}.
        \end{cases}
    \]
    It also holds for $\lambda_k=\lambda_0(k+1)^{-a}\ln(k+e)^{-1}$ with $\lambda_0\in (0, 1)$ for $a\in [1/2, 1]$, since
    \[
        \sum_{k=0}^{K-1}(k+1)^{-a}\ln(k+e)^{-1}=\begin{cases}
            \Theta(K^{1-a}\ln(K)^{-1})\quad &\text{if $a<1$}, \\
            \Theta(\ln(\ln(K)))&\text{if $a=1$},
        \end{cases}
    \]
    and, for all $a\in [1/2, 1]$,
    \[
        \sum_{k=0}^{K-1}(k+1)^{-2a}\ln(k+e)^{-2}=\Theta(1).
    \]
\end{remark}

\begin{lemma}[Decrease Inequality]\label{lem:ineq}
    Assume the Problem Assumption \ref{ass:nonexp} and the Variance Assumption \ref{ass:bounded_some_FP} hold, and let $(x_k)$ be the random iterates determined by Algorithm \ref{alg:SKM}. Then it holds that, for every $p\in \Fix(T)$,
    \[
        \ExpD{k}{\|x_{k+1}-p\|^2}\le (1+2\lambda_k^2)\|x_k-p\|^2 -\lambda_k(1-\lambda_k)\|Tx_k-x_k\|^2 + 2\lambda_k^2\sigma_*^2, 
    \]
\end{lemma}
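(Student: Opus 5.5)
Our plan is to decompose the stochastic update into the deterministic Krasnoselskii-Mann step plus a zero-mean noise term, and then apply the Variance Transfer Lemma \ref{lem:transfer}. Concretely, we write
\[
    x_{k+1}-p=\big[(1-\lambda_k)(x_k-p)+\lambda_k(Tx_k-p)\big]+\lambda_k\big(T_{\xi_k}x_k-Tx_k\big)=:u_k+\lambda_k e_k.
\]
Here $u_k$ is $\mathcal F_k$-measurable. Since $\xi_k$ is independent of $\mathcal F_k$, Assumption \ref{ass:nonexp} (Bochner integrability) gives $\ExpD{k}{e_k}=0$. Expanding the square, the cross term vanishes in conditional expectation, so
\[
    \ExpD{k}{\|x_{k+1}-p\|^2}=\|u_k\|^2+\lambda_k^2\,\ExpD{k}{\|(T_{\xi_k}-T)x_k\|^2}.
\]

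For the deterministic part, we use the standard identity $\|(1-\lambda)a+\lambda b\|^2=(1-\lambda)\|a\|^2+\lambda\|b\|^2-\lambda(1-\lambda)\|a-b\|^2$ with $a=x_k-p$ and $b=Tx_k-p$. Then $a-b=x_k-Tx_k$. Nonexpansiveness of $T$ (noted after Assumption \ref{ass:nonexp}) together with $Tp=p$ gives $\|b\|\le\|a\|$. Hence
\[
    \|u_k\|^2\le\|x_k-p\|^2-\lambda_k(1-\lambda_k)\|Tx_k-x_k\|^2 .
\]

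For the noise part, we apply Lemma \ref{lem:transfer} at the point $x=x_k$. This is legitimate conditionally because $x_k$ is $\mathcal F_k$-measurable and $\xi_k$ is independent of $\mathcal F_k$, so the conditional expectation equals the unconditional expectation over $\xi$ evaluated at $x_k$. The lemma holds for every $p\in\Fix(T)$ by Lemma \ref{lem:bounded_all_FP}. This yields $\ExpD{k}{\|(T_{\xi_k}-T)x_k\|^2}\le 2\sigma_*^2+2\|x_k-p\|^2$.

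Combining the three displays gives exactly $(1+2\lambda_k^2)\|x_k-p\|^2-\lambda_k(1-\lambda_k)\|Tx_k-x_k\|^2+2\lambda_k^2\sigma_*^2$. The only point requiring care is the measure-theoretic justification of the conditioning step, namely the freezing lemma for independent $\xi_k$ and the measurability of $\xi\mapsto T_\xi x_k$. We handle this by invoking independence and the measurability in Assumption \ref{ass:nonexp}. The algebra itself is routine.
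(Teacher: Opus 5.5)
Your proof is correct and is essentially the paper's argument. Both proofs use the unbiasedness of $T_{\xi_k}$ and $\|Tx_k-p\|\le\|x_k-p\|$ to reach $\ExpD{k}{\|x_{k+1}-p\|^2}\le\|x_k-p\|^2-\lambda_k(1-\lambda_k)\|Tx_k-x_k\|^2+\lambda_k^2\ExpD{k}{\|(T_{\xi_k}-T)x_k\|^2}$, and then invoke the Variance Transfer Lemma~\ref{lem:transfer}. Your split into the deterministic Krasnoselskii--Mann step plus zero-mean noise is only tidier bookkeeping than the paper's direct expansion in $T_{\xi_k}x_k-p$; it is the split the paper itself uses in Lemma~\ref{lem:decrease_contr}, and it makes explicit the identity $\ExpD{k}{\|T_{\xi_k}x_k-x_k\|^2}-\|Tx_k-x_k\|^2=\ExpD{k}{\|(T_{\xi_k}-T)x_k\|^2}$ that the paper's final step uses implicitly.
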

\begin{proof}
    It holds that
    \begin{align*}
        \ExpD{k}{\|x_{k+1}-p\|^2}
        &=\ExpD{k}{\|(1-\lambda_k)(x_k-p)+\lambda_k(T_{\xi_k}x_k-p)\|^2} \\
        &=(1-\lambda_k)^2\|x_k-p\|^2+\lambda_k^2\ExpD{k}{\|T_{\xi_k}x_k-p\|^2}+2\lambda_k(1-\lambda_k)\ExpD{k}{\langle x_k-p, T_{\xi_k}x_k-p\rangle} \\
        &= (1-\lambda_k)^2\|x_k-p\|^2+\lambda_k^2\ExpD{k}{\|T_{\xi_k}x_k-p\|^2}+2\lambda_k(1-\lambda_k)\langle x_k-p, Tx_k-p\rangle.
    \end{align*}
    By writing 
    \[
        2\langle x_k-p, Tx_k-p\rangle=\|x_k-p\|^2+\|Tx_k-p\|^2-\|Tx_k-x_k\|^2
    \]
    and 
    \begin{align*}
        \ExpD{k}{\|T_{\xi_k}x_k-p\|^2}
        &=\ExpD{k}{\|T_{\xi_k}x_k-x_k\|^2}+\|x_k-p\|^2+2\ExpD{k}{\langle T_{\xi_k}x_k-x_k, x_k-p\rangle} \\
        &=\ExpD{k}{\|T_{\xi_k}x_k-x_k\|^2}+\|x_k-p\|^2+2\langle Tx_k-x_k, x_k-p\rangle \\
        &=\ExpD{k}{\|T_{\xi_k}x_k-x_k\|^2}-\|Tx_k-x_k\|^2+\|Tx_k-p\|^2,
    \end{align*}
    we obtain that
    \begin{align*}
        \ExpD{k}{\|x_{k+1}-p\|^2}
        &=(1-\lambda_k)\|x_k-p\|^2+\lambda_k^2\ExpD{k}{\|T_{\xi_k}x_k-x_k\|^2}+\lambda_k\|Tx_k-p\|^2-\lambda_k \|Tx_k-x_k\|^2 \\
        &\le \|x_k-p\|^2+\lambda_k^2\ExpD{k}{\|T_{\xi_k}x_k-x_k\|^2}-\lambda_k^2 \|Tx_k-x_k\|^2-\lambda_k(1-\lambda_k)\|Tx_k-x_k\|^2 \\
        &\le (1+2\lambda_k^2)\|x_k-p\|^2 -\lambda_k(1-\lambda_k)\|Tx_k-x_k\|^2 + 2\lambda_k^2\sigma_*^2, 
    \end{align*}
    where the first inequality follows by nonexpansiveness of $T$, and the second by the Variance Transfer Lemma \ref{lem:transfer}. 
\end{proof}

Items 2 and 3 of the following lemma are standard consequences of stochastic quasi-Fejér and almost-supermartingale arguments, see for instance \cite[Proposition 2.3]{combettes_stochastic_2015}. We provide the short derivation below in order to identify explicitly the problem-dependent terms and establish the bound in Item 1.

\begin{lemma}\label{lem:bounded_iterates}\label{lem:bounded_its}
    Assume the Problem Assumption \ref{ass:nonexp} and the Variance Assumption \ref{ass:bounded_some_FP} hold, and let $(x_k)$ be the random iterates determined by Algorithm \ref{alg:SKM}. Under the Parameter Assumption \ref{ass:stepsize}, the following hold
    \begin{enumerate}
        \item $\Exp{\|x_k-p\|^2}$ is bounded uniformly in $k$ for all $p\in \Fix(T)$,
        \item $\|x_k-p\|$ is almost surely convergent for all $p\in \Fix(T)$, and
        \item $\sum_{k=0}^\infty \lambda_k(1-\lambda_k)\|Tx_k-x_k\|^2<+\infty$ almost surely.
    \end{enumerate}
\end{lemma}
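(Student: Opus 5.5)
All three items will be derived from the Decrease Inequality (Lemma \ref{lem:ineq}), which holds for every $p\in\Fix(T)$. Fix such a $p$ and set $a_k=\|x_k-p\|^2$, $\theta_k=\lambda_k(1-\lambda_k)\|Tx_k-x_k\|^2\ge 0$ and $\beta_k=2\lambda_k^2\sigma_*^2$. Lemma \ref{lem:ineq} then reads
\[
    \ExpD{k}{a_{k+1}}\le(1+2\lambda_k^2)a_k-\theta_k+\beta_k.
\]
Here $(2\lambda_k^2)$ and $(\beta_k)$ are deterministic, nonnegative and summable by the Parameter Assumption \ref{ass:stepsize}. Only square-summability of $(\lambda_k)$ is needed for this lemma.

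For Item 1, take total expectations and drop $\theta_k$, which gives $\Exp{a_{k+1}}\le(1+2\lambda_k^2)\Exp{a_k}+2\lambda_k^2\sigma_*^2$. Multiply by $\Lambda_{k+1}$, noting that $\Lambda_{k+1}(1+2\lambda_k^2)=\Lambda_k$ and $\Lambda_{k+1}\le 1$. This yields the telescoping relation $\Lambda_{k+1}\Exp{a_{k+1}}\le\Lambda_k\Exp{a_k}+2\lambda_k^2\sigma_*^2$. Summing from $0$ to $k-1$, with $\Lambda_0=1$, gives
\[
    \Exp{\|x_k-p\|^2}\le\Lambda_k^{-1}\Big(\|x_0-p\|^2+2\sigma_*^2\sum_{i=0}^{\infty}\lambda_i^2\Big)\le\bar\Lambda^{-1}\Big(\|x_0-p\|^2+2\sigma_*^2\sum_{i=0}^{\infty}\lambda_i^2\Big),
\]
using \eqref{eq:bound_Lambda}. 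The right-hand side is independent of $k$. If $x_0$ is random, one would also assume $\Exp{\|x_0-p\|^2}<\infty$; otherwise this is immediate.

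For Items 2 and 3, apply the Robbins--Siegmund almost-supermartingale theorem directly to the displayed inequality. Its hypotheses are that $a_k,\theta_k,\beta_k\ge0$ are $\mathcal F_k$-measurable, with $\sum 2\lambda_k^2<\infty$ and $\sum\beta_k<\infty$. It gives that $a_k$ converges almost surely to a finite random variable, and that $\sum_k\theta_k<\infty$ almost surely, which is exactly Item 3.

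The subtle point in Item 2 is the order of quantifiers: "for all $p\in\Fix(T)$ almost surely" versus "almost surely for all $p$". Robbins--Siegmund gives, for each fixed $p$, a null set outside of which convergence holds. If the stronger version is needed, the standard remedy (as in \cite[Proposition 2.3]{combettes_stochastic_2015}) applies.
\begin{itemize}
    \item Take a countable dense subset $\{p_j\}$ of $\Fix(T)$. This is possible since $\H$ is separable, and $\Fix(T)$ is closed because $T$ is nonexpansive.
    \item Take the union of the countably many null sets.
    \item Extend to arbitrary $p$ by the estimate $\big|\|x_k-p\|-\|x_k-p_j\|\big|\le\|p-p_j\|$, which shows that $\|x_k-p\|$ is Cauchy.
\end{itemize}
This extension is the only mildly delicate step; the rest is routine bookkeeping.
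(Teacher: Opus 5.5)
Your proposal is correct and follows essentially the paper's proof. Item 1 uses the same $\Lambda_{k+1}$-weighted telescoping of the Decrease Inequality with $\Lambda_k\ge\bar\Lambda$, Items 2 and 3 use Robbins--Siegmund with the same choices of $Y_k, X_k, Z_k, \eta_k$, and your countable-dense-subset remark on the order of quantifiers is exactly the argument the paper defers to the proof of Theorem \ref{thm:weak_conv}.
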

\begin{proof}
\begin{enumerate}
    \item By multiplying Lemma \ref{lem:ineq} by $\Lambda_{k+1}$ on both sides, noting that $\Lambda_{k+1}(1+2\lambda_k^2)=\Lambda_k$ and that $\Lambda_{k+1}\le 1$, and taking total expectation, we obtain
    \begin{equation}\label{eq:to_sum}
        \Lambda_{k+1}\Exp{\|x_{k+1}-p\|^2}\le \Lambda_{k}\Exp{\|x_k-p\|^2}-\Lambda_{k+1}\lambda_k(1-\lambda_k)\Exp{\|Tx_k-x_k\|^2}+2\lambda_k^2\sigma_*^2.
    \end{equation}
    By summing for $k=0, \ldots, K-1$ and bounding away the negative term, we obtain
    \[
    	\Exp{\|x_{K}-p\|^2}\le \frac{\|x_0-p\|^2+2\sum_{k=0}^{K-1}\lambda_k^2\sigma_*^2}{\Lambda_{K}}.
    \]
    In particular, since $\sum_{k=0}^{K-1} \lambda_k^2\le \sum_{k=0}^\infty \lambda_k^2<+\infty$ and $\Lambda_K\ge \bar\Lambda>0$, we get that $\Exp{\|x_K-p\|^2}$ is uniformly bounded in $K$.
    \item[2+3.] The conditions of the Robbins-Siegmund Almost-Supermartingale Theorem \ref{thm:robbins_siegmund} with
    \[
        Y_k=\|x_k-p\|^2, \quad X_k=\lambda_k(1-\lambda_k)\|Tx_k-x_k\|^2, \quad Z_k=2\lambda_k^2\sigma_*^2, \quad \text{and}\quad \eta_k=2\lambda_k^2
    \]
    hold by Lemma \ref{lem:ineq} and the Parameter Assumption \ref{ass:stepsize}. As such, we get that $\|x_k-p\|^2$ converges almost surely and that $\sum_{k=0}^\infty \lambda_k(1-\lambda_k)\|Tx_k-x_k\|^2<+\infty$ almost surely. \qedhere
\end{enumerate}
\end{proof}

The following theorem provides the main convergence rate theorem, in which we show that the expectation of the residual at a random iterate converges. This immediately implies convergence of the minimal iterate along the trajectory. However, in practice, that is inconvenient as it requires evaluating all the residuals, which is precisely what the stochastic setting seeks to avoid. The idea of introducing the random iterate is inspired by \cite{bravo_stochastic_2024}. 

\begin{theorem}[Bounds on Random and Minimal Running Residuals]\label{thm:residual}
    Assume the Problem Assumption \ref{ass:nonexp} and the Variance Assumption \ref{ass:bounded_some_FP} hold, and let $(x_k)$ be the random iterates determined by Algorithm \ref{alg:SKM}. 
    Then, for $K\ge 1$,
    \[
        \Exp{\|Tx_{N_K}-x_{N_K}\|^2}\le \frac{\dist(x_0, \Fix(T))^2+2\sum_{k=0}^{K-1}\lambda_k^2\sigma_*^2}{\Lambda_{K}\sum_{k=0}^{K-1}\lambda_k(1-\lambda_k)},
    \]
    where $N_K$ is sampled according to Equation \eqref{eq:def_prob}, and $\E$ is taken over all sources of randomness. Moreover, under the Parameter Assumption \ref{ass:stepsize}, for $K\ge 1$,
    \[
        \Exp{\|Tx_{N_K}-x_{N_K}\|}\le \mathcal O\left(\frac{1}{\sqrt{\sum_{k=0}^{K-1}\lambda_k(1-\lambda_k)}}\right) \quad \text{and}\quad \min_{k\le K-1}\|Tx_{k}-x_{k}\|\le o\left(\frac{1}{\sqrt{\sum_{k=0}^{K-1}\lambda_k(1-\lambda_k)}}\right),
    \]
    where the latter holds almost surely.
\end{theorem}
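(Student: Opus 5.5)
The plan is to telescope the weighted inequality \eqref{eq:to_sum} from the proof of Lemma \ref{lem:bounded_its}, which is derived from the Decrease Inequality (Lemma \ref{lem:ineq}) for an arbitrary $p\in\Fix(T)$. Summing over $k=0,\ldots,K-1$ and using $\Lambda_0=1$ and $\Lambda_K\Exp{\|x_K-p\|^2}\ge 0$ gives
\[
    \sum_{k=0}^{K-1}\Lambda_{k+1}\lambda_k(1-\lambda_k)\Exp{\|Tx_k-x_k\|^2}\le \|x_0-p\|^2+2\sigma_*^2\sum_{k=0}^{K-1}\lambda_k^2 .
\]
Since $N_K$ is independent of the algorithm and has law \eqref{eq:def_prob}, the left-hand side equals $\bigl(\sum_{k'}\Lambda_{k'+1}\lambda_{k'}(1-\lambda_{k'})\bigr)\Exp{\|Tx_{N_K}-x_{N_K}\|^2}$. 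The sequence $\Lambda_k$ is nonincreasing, so $\Lambda_{k'+1}\ge\Lambda_K$ for $k'\le K-1$, and the normalizing sum is at least $\Lambda_K\sum_k\lambda_k(1-\lambda_k)$. Dividing by it and taking the infimum over $p\in\Fix(T)$ yields the first bound; the infimum is attained because $\Fix(T)$ is closed and convex, but the infimum alone suffices. Lemma \ref{lem:bounded_all_FP} guarantees that $\sigma_*^2$ is valid for every $p$.

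For the $\mathcal O$ bound I will apply Jensen: $\Exp{\|Tx_{N_K}-x_{N_K}\|}\le\sqrt{\Exp{\|Tx_{N_K}-x_{N_K}\|^2}}$. Under the Parameter Assumption \ref{ass:stepsize}, the numerator is bounded by $\dist(x_0,\Fix(T))^2+2\sigma_*^2\sum_{k=0}^\infty\lambda_k^2$, and $\Lambda_K\ge\bar\Lambda$ by \eqref{eq:bound_Lambda}. This gives the rate $\mathcal O\bigl((\sum_{k<K}\lambda_k(1-\lambda_k))^{-1/2}\bigr)$.

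The almost sure $o$-statement is the main obstacle. Write $S_K=\sum_{k<K}\lambda_k(1-\lambda_k)$ and $a_k=\|Tx_k-x_k\|^2$. Lemma \ref{lem:bounded_its}(3) gives $\sum_k\lambda_k(1-\lambda_k)a_k<\infty$ almost surely. Moreover, $S_K\to\infty$: because $\lambda_k\in\ell^2$, we have $\lambda_k\to0$, so $1-\lambda_k\ge1/2$ eventually, and $\lambda_k\notin\ell^1$.

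Fix an $\omega$ in the full-measure event and let $m_K=\min_{k<K}a_k$, which is nonincreasing. For $K>J$,
\[
    m_K\,(S_K-S_J)\le\sum_{k=J}^{K-1}\lambda_k(1-\lambda_k)a_k\le\sum_{k\ge J}\lambda_k(1-\lambda_k)a_k=:r_J .
\]
Since $S_J/S_K\to0$ as $K\to\infty$ with $J$ fixed, we get $\limsup_K S_Km_K\le r_J$ for every $J$. Letting $J\to\infty$ gives $r_J\to0$, hence $S_Km_K\to0$. Taking square roots yields $\min_{k\le K-1}\|Tx_k-x_k\|=o(S_K^{-1/2})$ almost surely. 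The only subtlety is ensuring that the exceptional null set does not depend on $p$; here we fix one $p\in\Fix(T)$, which suffices because item (3) does not involve $p$.
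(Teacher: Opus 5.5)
Your proposal is correct and follows the paper's proof essentially step for step. You telescope \eqref{eq:to_sum}, identify the weighted average with $\Exp{\|Tx_{N_K}-x_{N_K}\|^2}$, use that $(\Lambda_k)$ is nonincreasing together with the infimum over $\Fix(T)$, and apply Jensen for the $\mathcal O$ bound. The only difference is that you rederive Lemma \ref{lem:small_o} inline with $\eta_k=\lambda_k(1-\lambda_k)$, explicitly checking $\sum_k\lambda_k(1-\lambda_k)=+\infty$ (which the paper leaves implicit), rather than citing it.
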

\begin{proof}
    By summing Equation \eqref{eq:to_sum} for $k=0, \ldots, K-1$, we obtain
    \[
        \Lambda_{K}\Exp{\|x_{K}-p\|^2}\le \|x_0-p\|^2-\sum_{k=0}^{K-1}\Lambda_{k+1}\lambda_k(1-\lambda_k)\Exp{\|Tx_k-x_k\|^2}+2\sum_{k=0}^{K-1}\lambda_k^2\sigma_*^2.
    \]
    Bounding $\Lambda_{K}\Exp{\|x_{K}-p\|^2}$ by $0$ and rearranging yields
    \[
        \sum_{k=0}^{K-1}\Lambda_{k+1}\lambda_k(1-\lambda_k)\Exp{\|Tx_k-x_k\|^2}\le \|x_0-p\|^2+2\sum_{k=0}^{K-1}\lambda_k^2\sigma_*^2,
    \]
    and specifically, we obtain
    \begin{equation*}\label{eq:intermediate}
        \frac{\sum_{k=0}^{K-1}\Lambda_{k+1}\lambda_k(1-\lambda_k)\Exp{\|Tx_k-x_k\|^2}}{\sum_{k=0}^{K-1}\Lambda_{k+1}\lambda_k(1-\lambda_k)}\le \frac{\|x_0-p\|^2+2\sum_{k=0}^{K-1}\lambda_k^2\sigma_*^2}{\sum_{k=0}^{K-1}\Lambda_{k+1}\lambda_k(1-\lambda_k)}.
    \end{equation*}
    The left-hand side coincides with $\Exp{\|Tx_{N_K}-x_{N_K}\|^2}$, and the right-hand side yields the wanted inequality by noting that $(\Lambda_k)$ is nonincreasing and by taking the infimum over all $p\in \Fix(T)$. The conclusion follows as $\Exp{\|Tx_{N_K}-x_{N_K}\|}^2\le \Exp{\|Tx_{N_K}-x_{N_K}\|^2}$. The final almost sure result follows from Lemma \ref{lem:bounded_its} and Lemma \ref{lem:small_o}.
\end{proof}

The previous proof technique naturally yields bounds on the expected residual. In order to get last-iterate bounds, we define a ghost-sequence of points not visited by the algorithm but only introduced for theoretical purposes, as done in \cite{du_linear_2019}, for instance. 
\begin{theorem}[Bound on Last-Iterate Residual]\label{thm:last}
	Assume the Problem Assumption \ref{ass:nonexp} and the Variance Assumption \ref{ass:bounded_some_FP} hold, and let $(x_k)$ be the random iterates determined by Algorithm \ref{alg:SKM}. 
    Then, for $K\ge 1$,
    \[
        \Exp{\|Tx_K-x_K\|^2}\le \left(16Q_K+\frac{2}{\sum_{k=0}^{K-1}\lambda_k(1-\lambda_k)}\right)\frac{\dist(x_0, \Fix(T))^2+2\sum_{k=0}^{K-1}\lambda_k^2\sigma_*^2}{\Lambda_K}+16Q_K \sigma_*^2,
    \]
    where $Q_K=\frac{\sum_{k=0}^{K-1}\Lambda_{k+1}\lambda_k(1-\lambda_k)\sum_{i=k}^{K-1}\lambda_i^2}{{\sum_{k=0}^{K-1}\Lambda_{k+1}\lambda_k(1-\lambda_k)}}$.
\end{theorem}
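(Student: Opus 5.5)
The plan is to compare the last iterate $x_K$ with an earlier iterate $x_k$ through a deterministic \emph{ghost sequence}. We then average over $k$ using the law of $N_K$ from \eqref{eq:def_prob}, so that Theorem \ref{thm:residual} applies. Write $D_K=\dist(x_0,\Fix(T))^2+2\sum_{k=0}^{K-1}\lambda_k^2\sigma_*^2$ and $r(x)=\|Tx-x\|$.

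\textbf{Step 1: ghost sequence.} Fix $k\in\{0,\dots,K-1\}$. Set $y_k=x_k$ and $y_{i+1}=(1-\lambda_i)y_i+\lambda_i Ty_i$ for $i=k,\dots,K-1$. This is deterministic (KM) given $\mathcal F_k$. For exact KM with a nonexpansive $T$, the residual is nonincreasing: $y_{i+1}-Ty_{i+1}=(1-\lambda_i)(y_i-Ty_i)+(Ty_i-Ty_{i+1})$ and $\|Ty_i-Ty_{i+1}\|\le\lambda_i\|y_i-Ty_i\|$. Hence $r(y_K)\le r(x_k)$.

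The map $x\mapsto x-Tx$ is $2$-Lipschitz, so $(a+b)^2\le 2a^2+2b^2$ gives
\[
r(x_K)^2\le 2r(y_K)^2+8\|x_K-y_K\|^2\le 2r(x_k)^2+8\|x_K-y_K\|^2 .
\]

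\textbf{Step 2: bound the deviation.} For $i\ge k$,
\[
x_{i+1}-y_{i+1}=\big[(1-\lambda_i)(x_i-y_i)+\lambda_i(Tx_i-Ty_i)\big]+\lambda_i(T_{\xi_i}-T)x_i .
\]
The bracket is $\mathcal F_i$-measurable with norm at most $\|x_i-y_i\|$, since $(1-\lambda_i)I+\lambda_iT$ is nonexpansive. The noise term has zero conditional mean, so the cross term vanishes under $\E_i$. The Variance Transfer Lemma \ref{lem:transfer} then gives
\[
\E\|x_{i+1}-y_{i+1}\|^2\le \E\|x_i-y_i\|^2+\lambda_i^2\big(2\sigma_*^2+2\E\|x_i-p\|^2\big).
\]
Take $p$ to be the projection of $x_0$ onto the closed convex set $\Fix(T)$. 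By the proof of Lemma \ref{lem:bounded_its}(1), and since $\Lambda$ is nonincreasing, $\E\|x_i-p\|^2\le D_i/\Lambda_i\le D_K/\Lambda_K$ for $i\le K$. Telescoping from $\|x_k-y_k\|=0$ yields
\[
\E\|x_K-y_K\|^2\le 2\Big(\sum_{i=k}^{K-1}\lambda_i^2\Big)\Big(\sigma_*^2+\frac{D_K}{\Lambda_K}\Big),
\]
and therefore
\[
\E\, r(x_K)^2\le 2\,\E\, r(x_k)^2+16\Big(\sum_{i=k}^{K-1}\lambda_i^2\Big)\Big(\sigma_*^2+\frac{D_K}{\Lambda_K}\Big).
\]

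\textbf{Step 3: average over $k$.} Multiply by $\P(N_K=k)$ and sum over $k$. The left side does not depend on $k$. The first term becomes $2\E\,r(x_{N_K})^2$, and the weighted sums of $\sum_{i\ge k}\lambda_i^2$ give exactly $Q_K$. Theorem \ref{thm:residual} bounds $\E\,r(x_{N_K})^2\le D_K/(\Lambda_K\sum_k\lambda_k(1-\lambda_k))$. Collecting terms gives the claimed inequality.

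The main obstacle is Step 2. One must keep the stochastic error in an orthogonal-increment form, so that the cross terms vanish and no $\sqrt{\cdot}$ losses appear. One must also bound $\E\|x_i-p\|^2$ uniformly through $D_K/\Lambda_K$ instead of by a uniform variance constant. This second point is exactly where the variance-at-solution assumption replaces a uniform bound.
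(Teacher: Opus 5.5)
Your proposal is correct and follows essentially the same route as the paper. Both arguments use a deterministic KM ghost sequence started at $x_k$, the nonincreasing residual of exact KM together with the $2$-Lipschitz bound on $I-T$, and a conditional-orthogonality deviation estimate controlled through the Variance Transfer Lemma and the uniform bound $D_K/\Lambda_K$ on $\Exp{\|x_i-p\|^2}$; averaging with the weights of $N_K$ then lets Theorem~\ref{thm:residual} finish. The only difference is cosmetic: you fix $p$ as the projection of $x_0$ onto the closed convex set $\Fix(T)$ from the start, whereas the paper keeps $p\in\Fix(T)$ arbitrary and takes the infimum at the end.
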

\begin{proof}
	Fix any $p\in \Fix(T)$. By the bound obtained in the proof of Lemma \ref{lem:bounded_its}, we know that $\Exp{\|x_k-p\|^2}$ for $k=0, \ldots, K-1$ is bounded by a constant $M<+\infty$ given by
	\[
		M=\frac{\|x_0-p\|^2+2\sum_{k=0}^{K-1}\lambda_k^2\sigma_*^2}{\Lambda_K}
	\]
	We start by fixing an index $k\in \{0, \ldots, K-1\}$, and define the sequence $\{y_i^{(k)}\}_{i\ge k}$, given by 
	\[
		y_k^{(k)} = x_k, \quad y_{i+1}^{(k)}=(1-\lambda_{i})y_i^{(k)}+\lambda_i Ty_i^{(k)}\quad \text{for $i\ge k$}.
	\]
	Specifically, the sequence $\{y_i^{(k)}\}_{i\ge k}$ starts at $x_k$, and follows the deterministic Krasnoselskii-Mann iterations $x^+=S_\lambda x$, for $S_\lambda\colon x\mapsto (1-\lambda)x+\lambda Tx$. As such, since the deterministic Krasnoselskii-Mann mapping is nonexpansive and $\ExpD{i}{(T-T_{\xi_i})x_i}=0$, it holds that, for $k\le i\le K-1$,
	\begin{align*}
		\ExpD{i}{\|x_{i+1}-y_{i+1}^{(k)}\|^2}
		&=\ExpD{i}{\|S_{\lambda_i}x_i-S_{\lambda_i}y_i^{(k)}-S_{\lambda_i}x_i+((1-\lambda_i)x_i+\lambda_iT_{\xi_i}x_i\|^2}\\
		&=\ExpD{i}{\|S_{\lambda_i}x_i-S_{\lambda_i}y_i^{(k)}+\lambda_i(T_{\xi_i}-T)x_i\|^2} \\
		&\le\ExpD{i}{\|x_i-y_i^{(k)}\|^2}+\lambda_i^2\ExpD{i}{\|(T_{\xi_i}-T)x_i\|^2} \\
		&\le\ExpD{i}{\|x_i-y_i^{(k)}\|^2}+2\lambda_i^2(\|x_i-p\|^2+\sigma_*^2),
	\end{align*}
    which, taking total expectation, yields
    \[
        \Exp{\|x_{i+1}-y_{i+1}^{(k)}\|^2}\le \Exp{\|x_i-y_i^{(k)}\|^2}+2\lambda_i^2(M+\sigma_*^2).
    \]
	Iterating this yields that
	\[
		\Exp{\|x_K-y_K^{(k)}\|^2}\le \Exp{\|x_k-y_k^{(k)}\|^2}+\sum_{i=k}^{K-1}\lambda_i^2(2M+2\sigma_*^2).
	\]
	As such, we obtain that
	\begin{align*}
		\|Tx_K-x_K\|
		&=\|Tx_K-Ty_K^{(k)}+Ty_K^{(k)}-y_K^{(k)}+y_K^{(k)}-x_K\| \\
		&\le 2\|x_K-y_K^{(k)}\| + \|Ty_K^{(k)}-y_K^{(k)}\| \\
		&\le 2\|x_K-y_K^{(k)}\|+ \|Tx_k-x_k\|,
	\end{align*}
	where the latter follows by the monotonic nonincrease of deterministic Krasnoselskii-Mann iterations. Specifically, 
	\[
		\Exp{\|Tx_K-x_K\|^2}\le 16\sum_{i=k}^{K-1}\lambda_i^2(M+\sigma_*^2)+ 2\Exp{\|Tx_k-x_k\|^2},
	\]
	which, after multiplication by $\Lambda_{k+1}\lambda_k(1-\lambda_k)$ and summing over $k=0, \ldots, K-1$, we get
	\[
		\sum_{k=0}^{K-1}\Lambda_{k+1}\lambda_k(1-\lambda_k)\Exp{\|Tx_K-x_K\|^2}\le 16\sum_{k=0}^{K-1}\Lambda_{k+1}\lambda_k(1-\lambda_k)\sum_{i=k}^{K-1}\lambda_i^2(M+\sigma_*^2)+ 2\sum_{k=0}^{K-1}\Lambda_{k+1}\lambda_k(1-\lambda_k)\Exp{\|Tx_k-x_k\|^2}.
	\]
	Dividing by $\sum_{k=0}^{K-1}\Lambda_{k+1}\lambda_k(1-\lambda_k)$ and using the bound from Theorem \ref{thm:residual} then yields
	\[
		\Exp{\|Tx_K-x_K\|^2}\le 16\frac{\sum_{k=0}^{K-1}\Lambda_{k+1}\lambda_k(1-\lambda_k)\sum_{i=k}^{K-1}\lambda_i^2}{{\sum_{k=0}^{K-1}\Lambda_{k+1}\lambda_k(1-\lambda_k)}}(M+\sigma_*^2)+2\frac{\|x_0-p\|^2+2\sum_{k=0}^{K-1}\lambda_k^2\sigma_*^2}{\sum_{k=0}^{K-1}\Lambda_{k+1}\lambda_k(1-\lambda_k)},
	\]
	which gives the wanted inequality by replacing $M$ by its value, bounding $\Lambda_{k+1}\ge \Lambda_K$ for all $k=0, \ldots, K-1$, and taking the infimum over all $p\in \Fix(T)$.
\end{proof}

\begin{corollary}[Convergence Rates of Expected Residual]\label{coro:complexity}
    Assume the Problem Assumption \ref{ass:nonexp} and the Variance Assumption \ref{ass:bounded_some_FP} hold, and let $(x_k)$ be the random iterates determined by Algorithm \ref{alg:SKM}. Then it holds that
    \begin{enumerate}
        \item (Polynomial Decreasing Relaxation Parameter) If $\lambda_k=\lambda_0 \cdot (k+1)^{-a}$ for $a\in (1/2, 1]$, then, for $K\ge 1$,
        \[
            \Exp{\|T x_{N_K}- x_{N_K}\|}\le 
            \begin{dcases}
                \mathcal O\left(K^{-\frac{1-a}2}\right)\quad &\text{if $a<1$} \\
                \mathcal O\left(\ln(K)^{-\frac12}\right) & \text{if $a=1$}
            \end{dcases} \quad \text{and}\quad \min_{k\le K-1}\|Tx_{k}-x_{k}\|\le 
            \begin{dcases}
                o\left(K^{-\frac{1-a}2}\right)\quad &\text{if $a<1$} \\
                o\left(\ln(K)^{-\frac12}\right) & \text{if $a=1$},
            \end{dcases}
        \]
        where the latter holds almost surely as $K\to\infty$. Moreover, 
        \[
            \Exp{\|Tx_K-x_K\|}\le \begin{cases}
                \mathcal O(K^{-\frac{2a-1}{2}})\quad &\text{if $1/2<a<2/3$}, \\
                \mathcal O(\ln(K)^{\frac12}K^{-\frac{1}{6}})\quad &\text{if $a=2/3$}, \\
                \mathcal O(K^{-\frac{1-a}{2}})\quad &\text{if $2/3<a<1$}, \\
                \mathcal O(\ln(K)^{-\frac12})\quad &\text{if $a=1$}.
            \end{cases}
        \]
        \item (Poly-Logarithmic Decreasing Relaxation Parameter) If $\lambda_k=\lambda_0 \cdot (k+1)^{-a}\ln(k+e)^{-1}$ for $a\in [1/2, 1]$, then, for $K\ge 3$,
        \[
            \Exp{\|T x_{N_K}- x_{N_K}\|}\le \begin{cases}
                \mathcal O(K^{-\frac{1-a}{2}}\ln(K)^{\frac12})\quad &\text{if $1/2\le a<1$}, \\
                \mathcal O((\ln\ln K)^{-\frac12}) & \text{if $a=1$},
            \end{cases}
        \]
        and, as $K\to \infty$,
        \[
            \min_{k\le K-1}\|Tx_{k}-x_{k}\|\le \begin{cases}
                o(K^{-\frac{1-a}{2}}\ln(K)^{\frac12})\ &\text{if $1/2\le a<1$}, \\
                o((\ln\ln K)^{-\frac12}) & \text{if $a=1$},
            \end{cases}
        \]
        where the latter holds almost surely.
        \item (Constant Horizon-Dependent Relaxation Parameter) If $\lambda_k\equiv \lambda_0 K^{-a}$ with $a\in [1/2, 1)$ for $K\ge 1$, then 
        \[
            \Exp{\|Tx_{N_K}-x_{N_K}\|}\le \mathcal O\left(K^{-\min(a, 1-a)/2}\right)\quad \text{and}\quad \Exp{\|Tx_K-x_K\|}\le \mathcal O\left(K^{-\min(-1+2a, 1-a)/2}\right).
        \]
    \end{enumerate}
\end{corollary}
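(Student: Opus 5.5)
The plan is to plug each step-size schedule into the general bounds of Theorem~\ref{thm:residual} (for the random and running-minimum residuals) and Theorem~\ref{thm:last} (for the last iterate). The only work is estimating three quantities as functions of $K$: $S_1(K)=\sum_{k<K}\lambda_k(1-\lambda_k)$, $S_2(K)=\sum_{k<K}\lambda_k^2$, and $Q_K$. For $\Lambda_K$ we use the lower bound $\Lambda_K\ge \exp(-2S_2(K))$ from \eqref{eq:bound_Lambda}.

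\textbf{Cases 1 and 2 (decreasing schedules).} Since $\lambda_k\le\lambda_0<1$, we have $1-\lambda_k\ge 1-\lambda_0$, so $S_1(K)=\Theta(\sum_{k<K}\lambda_k)$. Remark~\ref{rem:decreasing_relaxation_parameters} then gives $\Theta(K^{1-a})$ or $\Theta(\ln K)$ in case~1, and $\Theta(K^{1-a}/\ln K)$ or $\Theta(\ln\ln K)$ in case~2. In both cases $S_2(K)=\Theta(1)$: in case~1 because $2a>1$, and in case~2 by the displayed log-sum. Hence $\Lambda_K\ge\bar\Lambda$ and the numerator in Theorem~\ref{thm:residual} is bounded. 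The expected-residual rates follow by taking square roots. The almost sure $o(\cdot)$ statements are exactly the second part of Theorem~\ref{thm:residual}, since these schedules satisfy Assumption~\ref{ass:stepsize}.

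\textbf{Last iterate in case 1.} The main obstacle is bounding $Q_K$. Since $\Lambda_{k+1}\in[\bar\Lambda,1]$, we have
\[
Q_K\lesssim \frac{\sum_{k<K}(k+1)^{-a}\sum_{i=k}^{K-1}(i+1)^{-2a}}{\sum_{k<K}(k+1)^{-a}}.
\]
The tail sum satisfies $\sum_{i\ge k}(i+1)^{-2a}\lesssim (k+1)^{1-2a}$, so the numerator is at most $\sum_{k<K}(k+1)^{1-3a}$. This is $\Theta(K^{2-3a})$ if $a<2/3$, $\Theta(\ln K)$ if $a=2/3$, and $\Theta(1)$ if $a>2/3$. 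Dividing by $S_1(K)\asymp K^{1-a}$ (or $\ln K$ when $a=1$) gives $Q_K=O(K^{1-2a})$, $O(\ln K\cdot K^{-1/3})$, and $O(K^{a-1})$ (or $O(1/\ln K)$ when $a=1$), respectively. Combining with the term $2/S_1(K)$ in Theorem~\ref{thm:last} gives $\Exp{\|Tx_K-x_K\|^2}=O(\max(Q_K,1/S_1(K)))$. For $a<2/3$ we have $K^{1-2a}\ge K^{a-1}$, so $Q_K$ dominates. For $a>2/3$ both terms are of order $K^{a-1}$. Jensen's inequality then gives the four stated last-iterate rates.

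\textbf{Case 3 (constant horizon-dependent schedule).} With $\lambda_k\equiv\lambda_0K^{-a}$ we get $S_1(K)\asymp K^{1-a}$ and $S_2(K)=\lambda_0^2K^{1-2a}\le\lambda_0^2$ because $a\ge1/2$, so $\Lambda_K\ge e^{-2\lambda_0^2}$. Theorem~\ref{thm:residual} then bounds the squared residual by $(d^2+2\lambda_0^2K^{1-2a}\sigma_*^2)/K^{1-a}=O(K^{a-1})$, where $d=\dist(x_0,\Fix(T))$. This is $O(K^{-\min(a,1-a)})$, since $a-1\le -\min(a,1-a)$ always holds.

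For the last iterate, $\sum_{i=k}^{K-1}\lambda_i^2\le K\lambda_0^2K^{-2a}$, so $Q_K\le\lambda_0^2K^{1-2a}$. Together with $1/S_1\asymp K^{a-1}$ this gives $\Exp{\|Tx_K-x_K\|^2}=O(K^{-\min(2a-1,1-a)})$. Taking square roots and applying Jensen yields the claimed bounds.
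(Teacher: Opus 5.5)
Your proposal is correct and follows essentially the same route as the paper: you substitute the schedule-specific estimates of $\sum_k\lambda_k(1-\lambda_k)$, $\sum_k\lambda_k^2$, $\Lambda_K$ and $Q_K$ into Theorems \ref{thm:residual} and \ref{thm:last}, bounding $Q_K$ via the tail estimate $\sum_{i\ge k}(i+1)^{-2a}=\mathcal O((k+1)^{1-2a})$ and the sum $\sum_k(k+1)^{1-3a}$, and then take square roots. The only differences are cosmetic: you use $1-\lambda_k\ge 1-\lambda_0$ where the paper uses $1-\lambda_k\ge 1/2$ eventually, and in case 3 you observe that the bound is $\mathcal O(K^{-(1-a)/2})$, which coincides with $K^{-\min(a,1-a)/2}$ on $[1/2,1)$.
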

\begin{proof} The result follows by Theorems \ref{thm:residual} and \ref{thm:last}, by substituting appropriate bounds for $\sum_{k=0}^{K-1}\lambda_k$, $\sum_{k=0}^{K-1}\lambda_k^2$ and $\Lambda_K$.
\begin{enumerate} 
    \item The bounds for the first two inequalities follow directly from Remark \ref{rem:decreasing_relaxation_parameters} and Theorem \ref{thm:residual}. For the last inequality, we note that 
    \[
        \sum_{i=k}^{K-1}\lambda_i^2= \lambda_0^2\sum_{i=k}^{\infty}(i+1)^{-2a}\le \mathcal O((k+1)^{1-2a}),
    \]
    and since $1-\lambda_k\ge 1/2$ eventually, we get that 
    \[
        Q_K\le \mathcal O\left(\frac{\sum_{k=0}^{K-1}(k+1)^{1-3a}}{\sum_{k=0}^{K-1}(k+1)^{-a}}\right),
    \]
    and the result again follows from Remark \ref{rem:decreasing_relaxation_parameters} and Theorem \ref{thm:last}.
    \item This follows directly from Remark \ref{rem:decreasing_relaxation_parameters} and Theorem \ref{thm:residual}.
    \item It holds that 
    \[
        \sum_{k=0}^{K-1}\lambda_k^2= \lambda_0^2K^{1-2a}\quad\text{and}\quad \sum_{k=0}^{K-1}\lambda_k(1-\lambda_k)= \lambda_0K^{1-a}(1-\lambda_0K^{-a})=\lambda_0K^{1-a}-\lambda_0^2K^{1-2a}.
    \]
    Moreover, $\Lambda_K^{-1}\le e^{2\lambda_0^2K^{1-2a}}\le e^{2\lambda_0^2}$ by Equation \eqref{eq:bound_Lambda}, and the first conclusion follows by Theorem \ref{thm:residual}. Moreover, $Q_K\le \lambda_0^2K^{1-2a}$, and the second conclusion follows by Theorem \ref{thm:last}.
    \qedhere
\end{enumerate}
\end{proof}

\begin{theorem}[Strong Convergence of Residuals and Weak Convergence of Iterates]\label{thm:weak_conv}
    Assume the Problem Assumption \ref{ass:nonexp} and the Variance Assumption \ref{ass:bounded_some_FP} hold, and let $(x_k)$ be the random iterates determined by Algorithm \ref{alg:SKM}. Moreover, assume the Parameter Assumption \ref{ass:stepsize} holds. Then, almost surely, it holds that 
    \begin{enumerate}
        \item $\|Tx_k-x_k\|$ converges to $0$, and
        \item $x_k$ converges weakly to an element in $\Fix(T)$.
    \end{enumerate}
\end{theorem}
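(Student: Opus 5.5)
Our plan is to derive both items from Lemma \ref{lem:bounded_its} combined with a pathwise argument and Opial's lemma. The main difficulty is Item 1: Lemma \ref{lem:bounded_its} only gives $\sum_k \lambda_k(1-\lambda_k)\|Tx_k-x_k\|^2<+\infty$ almost surely. Since $(\lambda_k)\notin\ell^1(\N)$, and $\lambda_k\to 0$ so that $1-\lambda_k\ge 1/2$ eventually, this yields only $\liminf_k \|Tx_k-x_k\|=0$. To upgrade this to a genuine limit, we control the increments of the residual sequence $r_k=\|Tx_k-x_k\|$.

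\textbf{Step 1 (residual increments).} By nonexpansiveness of $T$,
\[
r_{k+1}\le r_k+2\|x_{k+1}-x_k\|=r_k+2\lambda_k\|T_{\xi_k}x_k-x_k\|\le r_k+2\lambda_k\big(r_k+\|(T_{\xi_k}-T)x_k\|\big).
\]
We would rather work with squares and a martingale, since the noise term is not summable pathwise. Using the deterministic KM identity $\|S_\lambda x-TS_\lambda x\|\le\|x-Tx\|$ and the same splitting as in the proof of Theorem \ref{thm:last}, we aim for
\[
\E_k r_{k+1}^2\le r_k^2+C\lambda_k^2\big(\|x_k-p\|^2+\sigma_*^2\big).
\]
To get it, write $x_{k+1}=S_{\lambda_k}x_k+\lambda_k e_k$ with $e_k=(T_{\xi_k}-T)x_k$. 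Then $\|x_{k+1}-Tx_{k+1}\|\le \|S_{\lambda_k}x_k-TS_{\lambda_k}x_k\|+2\lambda_k\|e_k\|\le r_k+2\lambda_k\|e_k\|$. Squaring and taking $\E_k$ leaves a cross term $4\lambda_k r_k\E_k\|e_k\|$, which is not of order $\lambda_k^2$.

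To avoid this cross term, we would instead apply Robbins--Siegmund directly to $r_k^2$ with perturbation $Z_k=2\lambda_k r_k\E_k\|e_k\|+2\lambda_k^2\E_k\|e_k\|^2$. Almost surely the iterates are bounded by Lemma \ref{lem:bounded_its}(2), hence $\E_k\|e_k\|^2\le 2\sigma_*^2+2\|x_k-p\|^2\le C(\omega)$ and $r_k\le 2\|x_k-p\|\le C(\omega)$. This still gives $Z_k\sim\lambda_k$, which is not summable.

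\textbf{Alternative for Step 1 (the route we would actually take).} We use the standard ``oscillation'' argument. Suppose $\limsup r_k>2\varepsilon$ while $\liminf r_k=0$. Then there are infinitely many disjoint upcrossing intervals $[m_j,n_j]$ on which $r$ goes from below $\varepsilon$ to above $2\varepsilon$, with $r_k\ge\varepsilon$ on the interior. On each such interval,
\[
\varepsilon\le r_{n_j}-r_{m_j}\le 2\sum_{k=m_j}^{n_j-1}\lambda_k(r_k+\|e_k\|).
\]
The term $\sum \lambda_k r_k$ over these intervals is controlled by Cauchy--Schwarz against $\sum\lambda_k r_k^2<\infty$. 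The noise term is handled by splitting $\|e_k\|$: write $\lambda_k e_k$ as a martingale increment, since $\sum_k\lambda_k e_k$ converges almost surely because $\sum\lambda_k^2\E_k\|e_k\|^2<\infty$ almost surely (bounded iterates together with Lemma \ref{lem:transfer}). We therefore redo the increment bound vectorially. Set $M_k=\sum_{i<k}\lambda_i e_i$, which is almost surely Cauchy, and consider $u_k=x_k-M_k$. This sequence follows deterministic KM steps up to errors of size $\lambda_k\|M_{k+1}-M_k\|$, and $r(u_k)$ differs from $r_k$ by at most $2\|M_k-M\|\to0$. Comparing deterministic KM steps from $x_{m_j}$ as in Theorem \ref{thm:last}, with the displacement controlled by the martingale tail, yields $\sum_{k=m_j}^{n_j}\lambda_k\ge c\varepsilon$. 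But $\sum_{k\in\cup_j[m_j,n_j]}\lambda_k\varepsilon^2\le\sum\lambda_k r_k^2<\infty$, which is a contradiction. This step is the main obstacle, and the place where the martingale tail must be handled carefully.

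\textbf{Step 2 (weak convergence).} Because $\H$ is separable, a countable dense subset of $\Fix(T)$ together with Lemma \ref{lem:bounded_its}(2) gives an almost sure event on which $\|x_k-p\|$ converges for every $p\in\Fix(T)$, by the usual density argument (the sets are closed and convex). On that event, Step 1 gives $r_k\to0$, so every weak cluster point lies in $\Fix(T)$ by demiclosedness of $I-T$. Opial's lemma then concludes the weak convergence.
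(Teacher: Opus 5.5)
Your proposal is correct and is essentially the paper's proof. The paper packages your oscillation argument as Lemma~\ref{lem:to_zero}, fed by the vectorial increment bound $|r_{k+\tau}-r_k|\le 2\bigl(\sum_{i=k}^{k+\tau-1}\lambda_i r_i+\bigl\|\sum_{i=k}^{k+\tau-1}\lambda_i(T_{\xi_i}-T)x_i\bigr\|\bigr)$, by a.s.\ summability of $\sum_k\lambda_k r_k^2$, and by a.s.\ convergence of the martingale. It obtains that convergence from $L^2$-boundedness via Lemma~\ref{lem:bounded_its}(1) rather than from your conditional-variance criterion, and then uses the same dense-subset, demiclosedness and Opial argument. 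Your detour through $u_k$ and deterministic KM comparisons is unnecessary, and as written it should read $u_k=x_k+M-M_k$ with $M=\lim_k M_k$, since otherwise $\|u_k-x_k\|=\|M_k\|$ need not vanish. On each upcrossing, $r_k\ge\varepsilon$ gives $\sum_{k=m_j}^{n_j-1}\lambda_k r_k\le \lambda_{m_j}\varepsilon+\varepsilon^{-1}\sum_{k>m_j}\lambda_k r_k^2\to 0$. Together with the vanishing martingale tail, this already contradicts $r_{n_j}-r_{m_j}>\varepsilon$.
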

\begin{proof}
We prove the two statements separately.
\begin{enumerate}
    \item The conclusion follows from Lemma \ref{lem:to_zero} with $\eta_k=\lambda_k$, $r_k=\|Tx_k-x_k\|$, $w_k=(T_{\xi_k}-T)x_k$. We shall show that the conditions of Lemma \ref{lem:to_zero} are almost surely satisfied:
    \begin{enumerate}
        \item The condition $\sum_{k=0}^\infty \eta_kr_k^2<+\infty$ is almost surely satisfied. Indeed, note that by the Parameter Assumption \ref{ass:stepsize}, $\sum_{k=0}^\infty{\lambda_k^2}<+\infty$, and hence $\lambda_k^2\to 0$. As such, there exists a $k_0\ge 0$ such that, for all $k\ge k_0$, $\lambda_k\le \frac{1}{2}$, in which case $\lambda_k(1-\lambda_k)\ge \frac{\lambda_k}{2}$. As such, almost surely,
        \[
            \sum_{k=k_0}^\infty \eta_kr_k^2=\sum_{k=k_0}^\infty \lambda_k\|Tx_k-x_k\|^2\le 2\sum_{k=k_0}^\infty \lambda_k(1-\lambda_k)\|Tx_k-x_k\|^2<+\infty,
        \]
        where the final inequality follows from Lemma \ref{lem:bounded_iterates}.
        \item The main inequality condition is satisfied for $L=2$. In fact, consider
        \begin{align*}
            |r_{k+\tau}-r_k|
            &= \left|\|Tx_{k+\tau}-x_{k+\tau}\|-\|Tx_k-x_k\|\right| 
            \le \left\|Tx_{k+\tau}-x_{k+\tau}-Tx_k+x_k\right\| 
            \le 2\|x_{k+\tau}-x_k\|\\
            &=2\left\|\sum_{i=k}^{k+\tau-1}x_{i+1}-x_{i}\right\|
            =2\left\|\sum_{i=k}^{k+\tau-1}\lambda_{i}(Tx_{i}-x_{i})+\lambda_{i}(T_{\xi_{i}}-T)x_{i}\right\| \\
            &\le 2\sum_{i=k}^{k+\tau-1}\lambda_{i} r_{i}+2\left\|\sum_{i=k}^{k+\tau-1}\lambda_{i}(T_{\xi_{i}}-T)x_{i}\right\| 
            = 2\left(\sum_{i=k}^{k+\tau-1}\eta_{i} r_{i}+\left\|\sum_{i=k}^{k+\tau-1}\eta_{i}w_{i}\right\|\right),
        \end{align*}
        where the inequalities follow by the triangle inequality and the nonexpansiveness of $T$.
        \item The quantity $(\sum_{k=0}^K \eta_kw_k)_K$ converges almost surely and in $L^2$. Indeed, since $(w_k)$ is a martingale difference sequence with respect to $(\mathcal F_{K+1})_K$ and by applying Lemma \ref{lem:transfer}, we get, for any $K\ge 0$,
        \[
            \Exp{\left\|\sum_{k=0}^{K}\eta_{k}w_{k}\right\|^2}=\sum_{k=0}^{K}\lambda_k^2\Exp{\left\|(T_{\xi_k}-T)x_k\right\|^2}\le 2\sigma_*^2\sum_{k=0}^{K}\lambda_k^2+2\sum_{k=0}^{K}\lambda_k^2\Exp{\|x_k-p\|^2},
        \]
        which is bounded since $(\lambda_k)\in \ell^2(\N)$ and $\Exp{\|x_k-p\|^2}$ is bounded by Lemma \ref{lem:bounded_its}. As such, the martingale sequence $(\sum_{k=0}^{K}\eta_{k}w_{k})_{K\ge 0}$ is uniformly $L^2$-bounded, so it converges almost surely and in $L^2$.
    \end{enumerate}
    Specifically, the conditions of Lemma \ref{lem:to_zero} are almost surely satisfied, and hence $r_k=\|Tx_k-x_k\|\to 0$ almost surely.
    \item Lemma \ref{lem:bounded_its} proves that $(x_k)$ is almost surely bounded.
    Consider any weak limit point $x_*$ of $(x_k)$. By Part 1, it holds that $Tx_k-x_k\to 0$ almost surely, and by Browder's Demiclosedness Principle \cite{browder_nonexpansive_1965}, since $T$ is nonexpansive, it holds that $x_*\in \Fix(T)$ almost surely. Moreover, for any $p\in \Fix(T)$, it holds that $\|x_k-p\|$ converges almost surely by Lemma \ref{lem:bounded_iterates}. Therefore there exists $\Omega_p$, dependent on $p$, representing the realization of the algorithmic randomness, such that $\|x_k-p\|$ converges on $\Omega_p$ and such that $\P(\Omega_p)=1$. We take a countable dense subset $\{p_j\}$ of $\Fix(T)$, which exists as $\H$ is separable, and denote $\Omega=\cap_j \Omega_{p_j}$, such that $\P(\Omega)=1$. Then, for any $q\in \Fix(T)$, we construct a sequence $(q_j)\subset \{p_j\}$ such that $q_j\to q$. It holds that 
    \[
        \left|\|x_k-q\|-\|x_k-q_j\|\right|\le \|q-q_j\|\stackrel{j\to \infty}{\rightarrow} 0.
    \]
    Since $\|x_k-q_j\|$ converges on $\Omega$ for all $j\ge0$, $\|x_k-q\|$ must converge on $\Omega$. As such, $\|x_k-p\|$ converges almost surely simultaneously for all $p\in \Fix(T)$ on the common event $\Omega$.
    As such, by Opial's Lemma \cite{opial_weak_1967}, $(x_k)$ must converge weakly to a point in $\Fix(T)$ almost surely. \qedhere
\end{enumerate}
\end{proof}

\subsubsection{Quasi-Contractive Setting}

We now consider the more restrictive setting of a \textit{contractive} operator, formalized by the following additional assumption. Notably, we only assume the expected operator to be $q$-quasi-contractive, and not each realization.

\begin{assumption}[Problem Assumption, Contractiveness]\label{ass:contractive}
    We assume the operator $T$ is $q$-quasi-contractive for $q\in (0, 1)$ and that $\Fix(T)$ is nonempty.
\end{assumption}
We note that under Assumption \ref{ass:contractive}, $\Fix(T)$ is a singleton.

\begin{lemma}[Decrease Inequality]\label{lem:decrease_contr}
    Let Problem Assumptions \ref{ass:nonexp} and \ref{ass:contractive} and Variance Assumption \ref{ass:bounded_some_FP} hold, and let $(x_k)$ be the random iterates determined by Algorithm \ref{alg:SKM}. Then it holds that, with $\Fix(T)=\{p\}$ and for all $k\ge 0$, almost surely,
    \[
        \ExpD{k}{\|x_{k+1}-p\|^2}\le \left[(1-\lambda_k+\lambda_kq)^2+2\lambda_k^2\right]\|x_k-p\|^2+2\lambda_k^2\sigma_*^2.
    \]
\end{lemma}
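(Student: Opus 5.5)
We decompose the update around the deterministic Krasnoselskii-Mann step and treat the noise as a conditionally centred perturbation. Write
\[
x_{k+1}-p=(1-\lambda_k)(x_k-p)+\lambda_k(Tx_k-p)+\lambda_k(T_{\xi_k}-T)x_k .
\]
Since $x_k$ is $\mathcal F_k$-measurable and $\xi_k$ is independent of $\mathcal F_k$, we have $\ExpD{k}{(T_{\xi_k}-T)x_k}=0$. Hence, when we expand the square and take the conditional expectation $\E_k$, the cross term between the deterministic part and the noise vanishes. This gives
\[
\ExpD{k}{\|x_{k+1}-p\|^2}=\|(1-\lambda_k)(x_k-p)+\lambda_k(Tx_k-p)\|^2+\lambda_k^2\ExpD{k}{\|(T_{\xi_k}-T)x_k\|^2}.
\]

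For the deterministic term, we use the triangle inequality together with the $q$-quasi-contractivity in the Problem Assumption~\ref{ass:contractive}, namely $\|Tx_k-p\|\le q\|x_k-p\|$. Since $\lambda_k\in(0,1)$, this yields
\[
\|(1-\lambda_k)(x_k-p)+\lambda_k(Tx_k-p)\|\le (1-\lambda_k+\lambda_k q)\|x_k-p\|,
\]
and squaring gives the first coefficient. For the noise term, we apply the Variance Transfer Lemma~\ref{lem:transfer} at the point $x=x_k$, using the unique fixed point $p$; this is legitimate because $x_k$ is $\mathcal F_k$-measurable and $\xi_k$ is independent of it. This bounds $\lambda_k^2\ExpD{k}{\|(T_{\xi_k}-T)x_k\|^2}$ by $2\lambda_k^2\|x_k-p\|^2+2\lambda_k^2\sigma_*^2$. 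Adding the two bounds gives exactly the claimed inequality almost surely.

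The only delicate point is justifying the conditional expectation step. Lemma~\ref{lem:transfer} is stated for deterministic $x$, so we must transfer it to $\E_k$ by a freezing argument using independence (for instance via the Doob–Dynkin lemma). We also need the required integrability, which holds because $\Exp{\|x_k-p\|^2}<+\infty$ by induction using the same inequality. Everything else is a routine computation.
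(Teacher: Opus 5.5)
Your proposal is correct and follows the paper's proof essentially line for line: the same decomposition of $x_{k+1}-p$ into the deterministic Krasnoselskii-Mann part plus the centred noise $\lambda_k(T_{\xi_k}-T)x_k$, the vanishing cross term, the triangle inequality combined with quasi-contractivity, and the Variance Transfer Lemma~\ref{lem:transfer} for the noise term. The freezing argument you flag is a rigor point the paper leaves implicit. The integrability-by-induction remark is unnecessary: for each frozen $x$, Lemma~\ref{lem:transfer} already gives a finite bound, which is all the conditional computation needs.
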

\begin{proof}
    This follows by the Variance Transfer Lemma \ref{lem:transfer} and noting that
    \begin{align*}
        \ExpD{k}{\|x_{k+1}-p\|^2}&=\ExpD{k}{\|(1-\lambda_k)(x_k-p)+\lambda_k (Tx_k-p)+\lambda_k(T_{\xi_k}-T)x_k\|^2} \\
        &=\|(1-\lambda_k)(x_k-p)+\lambda_k (Tx_k-p)\|^2+\ExpD{k}{\|\lambda_k(T_{\xi_k}-T)x_k\|^2} \\
        &\le (1-\lambda_k+\lambda_kq)^2\|x_k-p\|^2+2\lambda_k^2\sigma_*^2+2\lambda_k^2\|x_k-p\|^2,
    \end{align*}
    which rewrites to the wanted inequality.
\end{proof}
\begin{theorem}[Strong Convergence of Iterates and Convergence Rates]
    Let Problem Assumptions \ref{ass:nonexp} and \ref{ass:contractive} and Variance Assumption \ref{ass:bounded_some_FP} hold, and let $(x_k)$ be the random iterates determined by Algorithm \ref{alg:SKM}. Then, for $\Fix(T)=\{p\}$,
    \begin{enumerate}
        \item Under Parameter Assumption \ref{ass:stepsize}, $(x_k)$ converges strongly to $p$ almost surely;
        \item Under the step-size schedule $\lambda_k=\frac{b}{k+a}$ for $a> b> 0$, it holds that, for $K\ge 1$
        {\small\[
            \Exp{\|x_K-p\|}\le \begin{dcases}
                \mathcal O(K^{-{b(1-q)}})&\text{if $2b<\tfrac{1}{1-q}$,} \\
                \mathcal O(\log(K)^{\frac12}K^{-\frac12})&\text{if $2b=\tfrac{1}{1-q}$,} \\
                \mathcal O(K^{-\frac12})&\text{if $2b>\tfrac{1}{1-q}$,}
            \end{dcases}\quad \text{and}\quad \Exp{\|Tx_K-x_K\|}\le \begin{dcases}
                \mathcal O(K^{-{b(1-q)}})&\text{if $2b<\tfrac{1}{1-q}$,} \\
                \mathcal O(\log(K)^{\frac12}K^{-\frac12})&\text{if $2b=\tfrac{1}{1-q}$,} \\
                \mathcal O(K^{-\frac12})&\text{if $2b>\tfrac{1}{1-q}$,}
            \end{dcases}
        \]}
    \end{enumerate}
\end{theorem}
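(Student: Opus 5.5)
Both parts will come from the one-step recursion of Lemma \ref{lem:decrease_contr}. Write $e_k=\Exp{\|x_k-p\|^2}$ and $c=1-q\in(0,1)$. Since $(1-\lambda_k c)^2 = 1-2c\lambda_k+c^2\lambda_k^2$, the recursion reads
\[
    \ExpD{k}{\|x_{k+1}-p\|^2}\le \bigl(1-2c\lambda_k+(c^2+2)\lambda_k^2\bigr)\|x_k-p\|^2+2\lambda_k^2\sigma_*^2.
\]

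\textbf{Part 1.} Under the Parameter Assumption \ref{ass:stepsize}, apply the Robbins--Siegmund Theorem \ref{thm:robbins_siegmund} with
\[
    Y_k=\|x_k-p\|^2,\quad \eta_k=(c^2+2)\lambda_k^2,\quad X_k=2c\lambda_k\|x_k-p\|^2,\quad Z_k=2\lambda_k^2\sigma_*^2 .
\]
Both $(\eta_k)$ and $(Z_k)$ are summable. The theorem then gives that $\|x_k-p\|^2$ converges almost surely to some limit $\ell$, and that $\sum_k\lambda_k\|x_k-p\|^2<+\infty$ almost surely. Because $\sum_k\lambda_k=+\infty$, we get $\liminf_k\|x_k-p\|^2=0$. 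Hence $\ell=0$ almost surely, i.e.\ $x_k\to p$ strongly almost surely.

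\textbf{Part 2.} Take total expectations with $\lambda_k=b/(k+a)$; note $\lambda_k<1$ because $a>b$. This gives the scalar recursion
\[
    e_{k+1}\le\Bigl(1-\tfrac{2cb}{k+a}+\tfrac{C}{(k+a)^2}\Bigr)e_k+\tfrac{2b^2\sigma_*^2}{(k+a)^2},\qquad C=(c^2+2)b^2 .
\]
This is a Chung-type recursion, and the rate depends on how the exponent $2cb$ compares with $1$. Unrolling, the product $\prod_{i=j}^{K-1}(1-2cb/(i+a)+C/(i+a)^2)$ is bounded by a constant times $((j+a)/(K+a)))^{2cb}$. The second-order terms only contribute a bounded factor, since $\exp(\sum_i C/(i+a)^2)<\infty$; the factors that are negative or near zero for small $i$ can be handled by first bounding the product in absolute value. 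This yields
\[
    e_K\lesssim K^{-2cb}\,e_0+K^{-2cb}\sum_{j<K}(j+a)^{2cb-2}.
\]
The sum behaves as follows:
\begin{itemize}
    \item if $2cb<1$, it is bounded, so $e_K=\mathcal O(K^{-2cb})$;
    \item if $2cb=1$, it is $\Theta(\log K)$, so $e_K=\mathcal O(\log(K)/K)$;
    \item if $2cb>1$, it is $\Theta(K^{2cb-1})$, so $e_K=\mathcal O(1/K)$.
\end{itemize}
The condition $2b\gtrless 1/(1-q)$ in the statement is exactly $2cb\gtrless1$. Jensen's inequality, $\Exp{\|x_K-p\|}\le\sqrt{e_K}$, then gives the three stated rates for the distance. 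For the residual, use
\[
    \|Tx_K-x_K\|\le\|Tx_K-p\|+\|x_K-p\|\le(1+q)\|x_K-p\|,
\]
so the residual inherits the same rates.

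The main obstacle is making the product estimate rigorous. I would first choose $k_0$ large enough that $\lambda_k$ is small and every factor lies in $(0,1)$ for $k\ge k_0$. The finitely many earlier steps are absorbed into a constant, using $1+x\le e^x$ together with $\sum_{i=j}^{K-1}1/(i+a)\ge\log\frac{K+a}{j+a}$. Alternatively, one can prove $e_k\le D\,\phi(k)$ by induction, with $\phi$ the claimed rate squared. The induction step needs the candidate $\phi$ to satisfy $\phi(k+1)\ge(1-2cb/(k+a))\phi(k)+O(k^{-2})$ up to constants, which can be checked case by case.
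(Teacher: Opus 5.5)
Your proposal is correct and follows the paper's proof essentially step for step. For Part 1 you apply Robbins--Siegmund to the recursion of Lemma \ref{lem:decrease_contr} and combine it with $\sum_k\lambda_k=+\infty$; your split $\eta_k=(c^2+2)\lambda_k^2$, $X_k=2c\lambda_k\|x_k-p\|^2$ is only a cosmetic variant of the paper's. For Part 2 you use the Chung-type product estimate that the paper isolates as Lemma \ref{lem:rates_contra}, applied with exponent $2b(1-q)$, followed by Jensen's inequality and $\|Tx_K-x_K\|\le(1+q)\|x_K-p\|$. Your concern about negative or near-zero factors is unnecessary: each coefficient equals $(1-c\lambda_k)^2+2\lambda_k^2>0$, so no absolute values are needed.
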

\begin{proof}
    We prove the two statements separately.
    \begin{enumerate}
        \item By the Robbins-Siegmund Theorem \ref{thm:robbins_siegmund}, by rewriting the Decrease Inequality in Lemma \ref{lem:decrease_contr} as
        \[
            \ExpD{k}{\|x_{k+1}-p\|^2}\le \left[1+2\lambda_k^2\right]\|x_k-p\|^2-\left[1-(1-\lambda_k+\lambda_kq)^2\right]\|x_k-p\|^2+2\lambda_k^2\sigma_*^2,
        \]
        and noting that $1-(1-\lambda_k+\lambda_kq)^2\ge \lambda_k(1-q)$, we obtain that $\|x_k-p\|^2$ converges almost surely and that $\sum_{k=0}^\infty \lambda_k\|x_k-p\|^2<+\infty$ almost surely. As $\sum_{k=0}^\infty \lambda_k=+\infty$, we deduce that $\|x_k-p\|^2\to 0$ almost surely.
        \item This follows immediately by the Decrease Inequality in Lemma \ref{lem:decrease_contr} and Lemma \ref{lem:rates_contra}. The second inequality follows by noting that $\|Tx_K-x_K\|=\|Tx_K-p+p-x_K\|\le (1+q)\|x_K-p\|$. \qedhere
    \end{enumerate}
\end{proof}

\subsection{Numerical Illustration}\label{ssec:numerical}

We numerically validate our theoretical findings on a set of toy examples\footnote{The code is available on \url{https://github.com/DanielCortild/Stochastic-KM}.}. Specifically, we consider the setting of $2$ operators on $\H=\R^2$, $T_1\colon x\mapsto x$ the identity and $T_2\colon x\mapsto e^{i\theta}x$ a rotation by the angle $\theta\in (0, 2\pi)$, where we identified $\R^2$ with $\C$ for computational simplicity, sampled uniformly at random. As such, $T=\Exp{T_\xi}$ has as unique fixed point the origin. As this is a fixed point of both $T_1$ and $T_2$, Assumption \ref{ass:bounded_some_FP} holds with $\sigma_*^2=0$. Importantly, the uniformly bounded variance assumption fails for any $\theta\in (0, 2\pi)$, since
\[
    \Exp{\|(T-T_\xi)x\|^2}=\sin(\theta/2)^2\|x\|^2.
\]

\textbf{Experimental Setup.} We run the experiment on $36$ equally partitioned angles $\theta$ between $\pi/36$ and $\pi$, with initial point $x_0=(1,1)$. For each angle, given a trajectory, we compute $\ExpD{N_K}{\|Tx_{N_K}-x_{N_K}\|^2}$ and $\|Tx_K-x_K\|^2$, average it over multiple trajectories, and compare it to the result obtained in Theorems \ref{thm:residual} and \ref{thm:last}. These averages are over $100$ independent trajectories.

For each trajectory, we run $500000$ iterations. For the random expected residual, we set a relaxation parameter of $\lambda_k=\bar \lambda (k+1)^{-0.5}\ln(k+e)^{-1}$, and for the last-iterate expected residual, $\lambda_k=\bar\lambda (k+1)^{-2/3}$, for $\bar \lambda = 0.1$.

\textbf{Results.} The results are shown in Figure \ref{fig:numerical_validation}. The theoretical curves are slightly tighter than the ones obtained in the theorems, as we apply Remark \ref{rem:Young} for the comparison. The empirical decay is consistent with the dependence on $K$ predicted by Theorems \ref{thm:residual} and \ref{thm:last}. The theoretical bound is conservative in its constants, which is expected since Lemma \ref{lem:transfer} does not exploit the linear structure of this example. We note that the last-iterate bounds are overly conservative in many scenarios, but accurate (up to constant factors) in extreme cases.

\begin{figure}[H]
    \centering
    \includegraphics[width=1\linewidth]{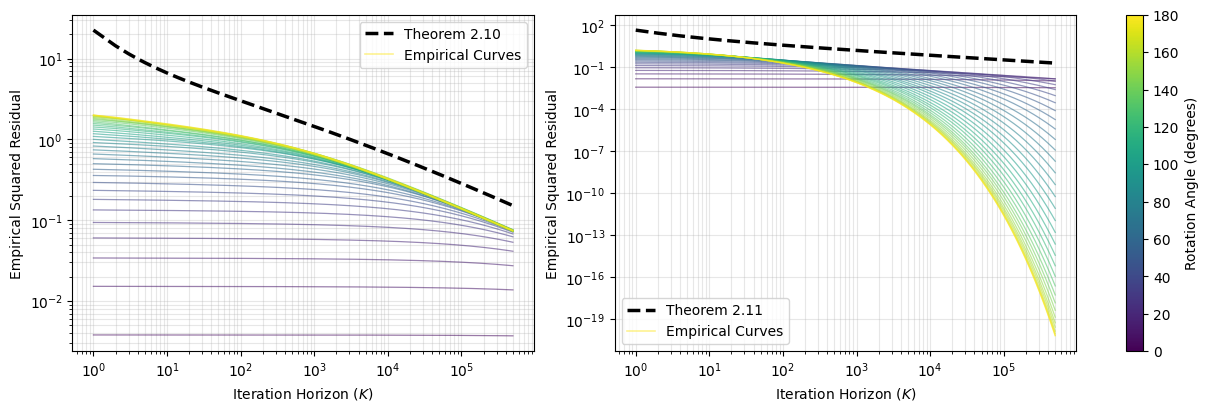}
    \caption{Empirical $\ExpD{N_K}{\|Tx_{N_K}-x_{N_K}\|^2}$ compared to Theorem \ref{thm:residual} (left) and empirical $\|Tx_K-x_K\|^2$ compared to Theorem \ref{thm:last} (right).}
    \label{fig:numerical_validation}
\end{figure}
\section{Special Cases}\label{sec:apps}

In this section, we illustrate the applicability of our results by considering three representative applications. First, in Section \ref{ssec:sgd}, we revisit Stochastic Gradient Descent, showing our framework recovers known guarantees. Secondly, in Section \ref{ssec:stos}, we analyze Stochastic Three-Operator Splitting and Stochastic Backward-Forward Splitting, for which we obtain, to the best of our knowledge, the first convergence guarantees without uniformly bounded variance. The convergence of Stochastic Three-Operator Splitting however requires the restrictive convex-graph assumption, which we overcome in Section \ref{ssec:sltos}, where we consider a novel Stochastic Lifted Three-Operator Splitting.

\subsection{Stochastic Gradient Descent}\label{ssec:sgd}

We consider the standard expectation minimization problem 
\begin{equation}\label{eq:Emin}\tag{$\E$-min}
    \min_{x\in \mathcal H} f(x)=\ExpD{\xi\sim\mathcal D}{f_\xi(x)},
\end{equation}
where $f_\xi\colon \H\to \R$ is a family of convex and $L$-smooth functions. We assume the problem is well-defined, in the sense that $\xi\mapsto f_\xi(x)$ is $\mathcal D$-measurable for all $x\in \H$ and that $\ExpD{\xi}{f_\xi(x)}$ is finite for all $x\in \H$, and that the problem is well-posed, in the sense that $f$ has a minimizer. Moreover, we assume the problem is differentiable, namely that $f_\xi$ is differentiable for $\D$-almost all $\xi$ and $\nabla f(x)=\ExpD{\xi}{\nabla f_\xi(x)}$. Regarding the noise, we make the weak assumption of bounded variance at the solution \cite{bach_nonasymptotic_2011}, namely that there exists a point $x_*\in \argmin f$ such that 
\begin{equation}\tag{SGD-Ass}\label{eq:assSGD}
    \Exp{\|\nabla f_\xi(x_*)\|^2}\le \sigma_*^2<+\infty.
\end{equation}
A natural algorithm is the one of \textit{Stochastic Gradient Descent} (SGD), given by, with $x_0\in \H$ and for $k\ge 0$,
\begin{equation}\label{eq:SGD}\tag{SGD}
    x_{k+1}=x_k-\gamma\lambda_k \nabla f_{\xi_k}(x_k), \quad \text{where}~ \xi_k\stackrel{\text{iid}}\sim \mathcal D,
\end{equation}
where $(\gamma\lambda_k)\subset (0, 2/L)$ is a sequence of \textit{step-sizes}. This algorithm may be cast into the form of Algorithm \ref{alg:SKM} by writing $T_\xi=I-\gamma \nabla f_\xi$, which is nonexpansive if $\gamma\in (0, 2/L)$, such that the Problem Assumption \ref{ass:nonexp} is verified. We note that $\Fix(T)=\{x_*\colon \nabla f(x_*)=0\}=\argmin f$. Moreover, for the $x_*\in \argmin f=\Fix(T)$ given in Equation \eqref{eq:assSGD},
\[
    \Exp{\|(T-T_\xi)x_*\|^2}=\gamma^2\Exp{\|\nabla f_\xi(x_*)-\nabla f(x_*)\|^2}=\gamma^2\Exp{\|\nabla f_\xi(x_*)\|^2}\le \gamma^2\sigma_*^2,
\]
so the Variance Assumption \ref{ass:bounded_some_FP} is verified up to a multiplicative factor of $\gamma^2$ in the constant.

As such, Corollary \ref{coro:complexity} and Theorem \ref{thm:weak_conv} imply the following.
\begin{corollary}[Application to Algorithm SGD]\label{coro:sgd}
    Consider Problem \eqref{eq:Emin} and assume the variance-at-solution condition \eqref{eq:assSGD} holds. Let $(x_k)$ be generated by the \eqref{eq:SGD} iterations with $\gamma\in (0, 2/L)$. Then, for $\lambda_0\in (0, 1)$,
    \[
        \Exp{\|\nabla f(x_{N_K})\|}\le \begin{cases}
            \mathcal O(K^{-\frac14}) &\text{if $\lambda_k\equiv \lambda_0K^{-\frac12}$}, \\
            \mathcal O(\ln(K)^{\frac12}K^{-\frac14}) &\text{if $\lambda_k= \lambda_0(k+1)^{-\frac12}\ln(k+e)^{-1}$},
        \end{cases}
    \]
    where $N_K$ is defined in \eqref{eq:def_prob}, and
    \[
        \Exp{\|\nabla f(x_{K})\|}\le \begin{cases}
            \mathcal O(K^{-\frac16}) &\text{if $\lambda_k\equiv \lambda_0K^{-\frac23}$}, \\
            \mathcal O(\ln(K)^{\frac12}K^{-\frac16}) &\text{if $\lambda_k= \lambda_0(k+1)^{-\frac23}$}.
        \end{cases}
    \]
    Moreover, for the non-finite-horizon step-size schedules, $(x_k)$ converges weakly almost surely to a minimizer of $f$, and $\|\nabla f(x_k)\|\to 0$ almost surely.
\end{corollary}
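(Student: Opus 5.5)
The plan is to reduce Corollary \ref{coro:sgd} to Corollary \ref{coro:complexity} and Theorem \ref{thm:weak_conv}, applied to $T_\xi=I-\gamma\nabla f_\xi$. The reduction has three ingredients.

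First, I check the Problem Assumption \ref{ass:nonexp}. Each $f_\xi$ is convex and $L$-smooth, so by Baillon--Haddad $\nabla f_\xi$ is $\frac1L$-cocoercive. Hence, for $\gamma\in(0,2/L)$,
\[
\|T_\xi x-T_\xi y\|^2=\|x-y\|^2-\gamma\left(\tfrac{2}{L}-\gamma\right)\cdot L\,\langle\cdots\rangle\le\|x-y\|^2,
\]
or more simply $\|T_\xi x-T_\xi y\|^2\le \|x-y\|^2-\gamma(2/L-\gamma)\|\nabla f_\xi x-\nabla f_\xi y\|^2$. Measurability and integrability of $\xi\mapsto T_\xi x$ follow from the standing well-definedness and differentiability hypotheses, and these also give $T=I-\gamma\nabla f$. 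Its fixed-point set is $\Fix(T)=\argmin f\neq\emptyset$ by convexity. For the Variance Assumption \ref{ass:bounded_some_FP}, the displayed computation gives the constant $\gamma^2\sigma_*^2$. Finally, the iteration $x_{k+1}=(1-\lambda_k)x_k+\lambda_kT_{\xi_k}x_k$ coincides with \eqref{eq:SGD}, so $(x_k)$ is a realization of Algorithm \ref{alg:SKM}.

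Second, I translate residuals into gradients. Since $Tx-x=-\gamma\nabla f(x)$, we have $\|\nabla f(x)\|=\gamma^{-1}\|Tx-x\|$, so every rate on $\Exp{\|Tx-x\|}$ transfers up to the constant $\gamma^{-1}$. The four rates are then read off Corollary \ref{coro:complexity}:
\begin{itemize}
\item For the random iterate with $\lambda_k\equiv\lambda_0K^{-1/2}$, item 3 with $a=1/2$ gives $K^{-\min(1/2,1/2)/2}=K^{-1/4}$.
\item For the random iterate with $\lambda_k=\lambda_0(k+1)^{-1/2}\ln(k+e)^{-1}$, item 2 with $a=1/2$ gives $K^{-1/4}\ln(K)^{1/2}$.
\item For the last iterate with $\lambda_k\equiv\lambda_0K^{-2/3}$, item 3 with $a=2/3$ gives $K^{-\min(1/3,1/3)/2}=K^{-1/6}$.
\item For the last iterate with $\lambda_k=\lambda_0(k+1)^{-2/3}$, item 1 with $a=2/3$ gives $\ln(K)^{1/2}K^{-1/6}$.
\end{itemize}

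Third, the almost sure statements apply to the two horizon-independent schedules. Both lie in $\ell^2\setminus\ell^1$ by Remark \ref{rem:decreasing_relaxation_parameters}, so the Parameter Assumption \ref{ass:stepsize} holds. Theorem \ref{thm:weak_conv} then gives $\|Tx_k-x_k\|\to0$, hence $\|\nabla f(x_k)\|\to0$, and weak convergence of $(x_k)$ to a point of $\Fix(T)=\argmin f$, all almost surely.

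The only mildly delicate step is the nonexpansiveness of $T_\xi$ for every $\gamma<2/L$, rather than only $\gamma\le 1/L$. Cocoercivity from Baillon--Haddad handles this: expanding $\|x-y-\gamma(\nabla f_\xi x-\nabla f_\xi y)\|^2$ and using $\langle\nabla f_\xi x-\nabla f_\xi y,x-y\rangle\ge\frac1L\|\nabla f_\xi x-\nabla f_\xi y\|^2$ yields the bound with the factor $\gamma(2/L-\gamma)\ge0$. The rest is bookkeeping: the variance constant is rescaled by $\gamma^2$, and the gradient bound picks up the factor $\gamma^{-1}$; both are absorbed into the $\mathcal O$.
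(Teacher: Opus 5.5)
Your proposal is correct and follows the paper's argument. You cast \eqref{eq:SGD} as Algorithm~\ref{alg:SKM} with $T_\xi=I-\gamma\nabla f_\xi$, check nonexpansiveness, $\Fix(T)=\argmin f$ and the variance constant $\gamma^2\sigma_*^2$, then read the four rates off Corollary~\ref{coro:complexity} via $\|\nabla f(x)\|=\gamma^{-1}\|Tx-x\|$ and the almost sure claims off Theorem~\ref{thm:weak_conv}. Two cosmetic points: your first display with $\langle\cdots\rangle$ is garbled (the Baillon--Haddad version that follows is the right one), and $\Fix(T)\neq\emptyset$ comes from the standing assumption that $f$ has a minimizer, with convexity only giving $\Zer(\nabla f)=\argmin f$.
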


We make several remarks regarding this result.
\begin{enumerate}
    \item By $L$-smoothness, rates on the function value gap can be translated into rates on the expected gradient norm, since
    \[
        \Exp{\|\nabla f(x)\|}\le \sqrt{\Exp{\|\nabla f(x)\|^2}}\le \sqrt{2L\Exp{f(x)-\inf f}}.
    \]
    The rate $\mathcal O(K^{-1/4})$ on the expected gradient norm is the rate obtained by translating the known minimax rate for function values for stochastic approximation in the non-strongly convex case \cite{agarwal_informationtheoretic_2009}. Our obtained rate was already established under the same assumption in \cite{gower_sgd_2021,cortild_biasoptimal_2025} for the average iterate. 
    \item Our rate for the last iterate is slower than current state-of-the-art results \cite{attia_fast_2025,garrigos_lastiterate_2025}, which achieve (up to logarithmic factors) the rate of $\mathcal O(K^{-1/4})$. Notably, our rate matches the previously best-known rate, presented in \cite{bach_nonasymptotic_2011}.
\end{enumerate}
Overall, this result shows that classical guarantees for Stochastic Gradient Descent can be recovered as a consequence of our fixed point framework, without relying on problem-specific arguments.

\subsection{Stochastic Three-Operator Splitting}\label{ssec:stos}

We consider the monotone inclusion problem 
\begin{equation}\tag{MI3}\label{eq:MI}
    \text{Find $x_*\in \Zer(A+B+C)$},
\end{equation}
where $A$ and $B$ are maximally monotone operators, and $C$ is a $\tau$-cocoercive operator.
We assume we only have access to $C$ through an unbiased stochastic estimator, namely that $Cx=\Exp{C_\xi x}$, where $C_\xi$ is $\tau$-cocoercive $\D$-almost surely. We assume the problem is well-defined, in the sense that $\xi\mapsto C_\xi x$ is $\D$-measurable for all $x\in H$ and $\Exp{C_\xi x}$ is finite for all $x\in \H$, and well-posed, in the sense that $\Zer(A+B+C)\neq \emptyset$.

A standard algorithm to tackle Problem \eqref{eq:MI} is the \textit{Three-Operator Splitting} method \cite{davis_threeoperator_2017a}, which iterates an operator
\[
    T_{\det} = I-J_{\rho A}+J_{\rho B}\circ (2J_{\rho A}-I-\rho C\circ J_{\rho A}),
\]
where $\rho\in (0, 2\tau)$ acts as a step-size. We propose the following stochastic version of the operator
\begin{equation}\label{eq:TOS_xi}\tag{STOS-Op}
    T_\xi = I-J_{\rho A}+J_{\rho B}\circ (2J_{\rho A}-I-\rho C_\xi\circ J_{\rho A}).
\end{equation}
Applying Algorithm \ref{alg:SKM} with the operator $T_\xi$ gives rise to the \textit{Stochastic Three-Operator Splitting} method, given by, with $x_0\in \H$ and for $k\ge 0$,
\begin{equation}\tag{STOS}\label{eq:STOS}
    \begin{cases}
        y_k&=J_{\rho A}x_k,\\
        z_k&= J_{\rho B}(2y_k-x_k-\rho C_{\xi_k}y_k), \quad \text{where $\xi_k\stackrel{\text{iid}}\sim \mathcal D$,} \\
        x_{k+1}&=x_k-\lambda_k(y_k-z_k).
    \end{cases}
\end{equation}

An important realization is that in general $T_{\det}\neq \Exp{T_\xi}=T$, since $J_{\rho B}$ is in general a nonlinear operator. As such, we only show convergence to a point in $\Fix(T)$, which are not necessarily solutions to the original problem, provided this set is nonempty.

However, one can show that, in certain settings, the set of fixed points of $T=\Exp{T_\xi}$ coincides with the set of zeros of $A+B+C$, which are the solutions to our problem, such that the operator $T$ still encodes the wanted problem.

\begin{lemma}[Link between Fixed Points and Zeros]\label{lem:link}
    Let $A, B\colon \H\mapsto 2^\H$ be maximally monotone such that $\graph(B)$ is convex, and let $C_\xi\colon \H\to \H$ be $\tau$-cocoercive for $\mathcal D$-almost all $\xi$. Define $T_\xi$ as in Equation \eqref{eq:TOS_xi} and let $T=\Exp{T_\xi}$. Then $J_{\rho A}(\Fix(T))=\Zer(A+B+C)$.
\end{lemma}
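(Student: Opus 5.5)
The plan is to reduce the statement to the deterministic characterization of fixed points of Davis--Yin splitting. I will show that, under the convex-graph assumption on $B$, the averaged operator $T=\Exp{T_\xi}$ coincides with $T_{\det}$. The only nonlinearity that prevents exchanging expectation and composition in \eqref{eq:TOS_xi} is $J_{\rho B}$; the terms $I$, $J_{\rho A}$ and $2J_{\rho A}-I$ do not depend on $\xi$. So the heart of the proof is to show that $J_{\rho B}$ is a continuous affine map.

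\textbf{Step 1: $J_{\rho B}$ is affine.} The graph of $J_{\rho B}$ is $\{(u+\rho v,u): (u,v)\in\graph(B)\}$. This is the image of $\graph(B)$ under a linear map, hence convex. By Minty's theorem, $J_{\rho B}$ is single-valued with full domain $\H$. For $x,x'\in\H$ and $t\in[0,1]$, convexity places $(tx+(1-t)x',\,tJ_{\rho B}x+(1-t)J_{\rho B}x')$ in the graph. Single-valuedness then forces $J_{\rho B}(tx+(1-t)x')=tJ_{\rho B}x+(1-t)J_{\rho B}x'$.

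A map that preserves convex combinations on all of $\H$ has the form $J_{\rho B}=c+L$ with $L$ additive and positively homogeneous, via the usual extension along lines. Since $J_{\rho B}$ is nonexpansive, $L$ is Lipschitz and hence a bounded linear operator. I expect this step to be the main obstacle. The delicate points are the rigorous extension from convex to affine combinations, and the subsequent exchange with the Bochner integral. The exchange is legitimate because bounded linear operators commute with Bochner expectation.

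\textbf{Step 2: $T=T_{\det}$.} Fix $x$ and set $y=J_{\rho A}x$, which is deterministic. By the integrability of $\xi\mapsto C_\xi y$ from the standing well-definedness assumption and Step 1, we get
\[
\Exp{J_{\rho B}(2y-x-\rho C_\xi y)}=c+L\bigl(2y-x-\rho\,\Exp{C_\xi y}\bigr)=J_{\rho B}(2y-x-\rho Cy).
\]
Hence $Tx=T_{\det}x$ for every $x$, and therefore $\Fix(T)=\Fix(T_{\det})$. Note also that $C=\Exp{C_\xi}$ is $\tau$-cocoercive, by Jensen applied to the cocoercivity inequality, so $T_{\det}$ is the standard Davis--Yin operator.

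\textbf{Step 3: the Davis--Yin characterization.} I will prove $J_{\rho A}(\Fix(T_{\det}))=\Zer(A+B+C)$ directly.

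For the inclusion ``$\subseteq$'', let $x\in\Fix(T_{\det})$, $y=J_{\rho A}x$ and $z=J_{\rho B}(2y-x-\rho Cy)$. Then $x=T_{\det}x$ means $z=y$. The resolvent definitions give $x-y\in\rho Ay$ and $2y-x-\rho Cy-y\in\rho By$. Adding these yields $-\rho Cy\in\rho(A+B)y$, that is, $y\in\Zer(A+B+C)$.

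For the inclusion ``$\supseteq$'', let $y\in\Zer(A+B+C)$ and choose $a\in Ay$, $b\in By$ with $a+b+Cy=0$. Set $x=y+\rho a$. Then $J_{\rho A}x=y$. Moreover $2y-x-\rho Cy=y-\rho a-\rho Cy=y+\rho b$, so $J_{\rho B}(2y-x-\rho Cy)=y$. Therefore $T_{\det}x=x-y+y=x$, and $y=J_{\rho A}x\in J_{\rho A}(\Fix(T_{\det}))$.

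Combining Steps 2 and 3 gives $J_{\rho A}(\Fix(T))=\Zer(A+B+C)$.
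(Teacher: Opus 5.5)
Your proof is correct, but it takes a different route from the paper. The paper never compares $T$ with $T_{\det}$. It works directly with the random pairs $w_\xi=J_{\rho B}(2v-x-\rho C_\xi v)$ and $u_\xi\in Bw_\xi$, and uses convexity of $\graph(B)$ to get $\Exp{u_\xi}\in B(\Exp{w_\xi})$. For ``$\subseteq$'' it then plugs in the fixed-point condition $\Exp{w_\xi}=v$. For ``$\supseteq$'' it uses monotonicity of $B$ against $b\in Bz$ to force $\Exp{w_\xi}=z$.

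You instead extract the structural content of the hypothesis: a convex graph makes $J_{\rho B}$ a continuous affine map. Your Step 1 is sound, and the passage from convex to affine combinations is routine, via $L(tx)=tL(x)$, $L(-x)=-L(x)$ and $L(x+y)=2L(\tfrac{x+y}{2})$. Hence $T=T_{\det}$ identically, and the lemma reduces to the standard deterministic Davis--Yin characterization.

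Your route buys a stronger conclusion, $\Fix(T)=\Fix(T_{\det})$, and shows how restrictive the convex-graph hypothesis is, since $\graph(B)$ is then a closed affine subspace. In particular, the paper's caveat that $T_{\det}\neq T$ in general never arises under the hypotheses of this lemma, including the cases $B=0$ and the linear skew operator $\mathcal B$ of the lifted scheme. It also makes the interchange of expectation and $J_{\rho B}$ transparent, because bounded linear maps commute with Bochner integrals. By contrast, the paper's step $\Exp{u_\xi}\in B(\Exp{w_\xi})$ tacitly uses that $\graph(B)$ is closed (true by maximal monotonicity) and that $(w_\xi,u_\xi)$ is integrable. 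The paper's argument, for its part, is shorter and stays at the level of graphs, with no linear-algebraic detour.
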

\begin{proof}
    We show the equality by proving both inclusions.
    \begin{enumerate}
        \item[($\subseteq$)] Take $x\in \Fix(T)$, and define $v=J_{\rho A}x$, such that $\frac{x-v}{\rho}\in Av$. Moreover, define $w_\xi=J_{\rho B}(2v-x-\rho C_\xi v)$, such that 
        \[
            x=Tx=\Exp{x-v+w_\xi}\quad \iff \quad \Exp{w_\xi}=v.
        \]
        Moreover,
        \[
            u_\xi=\frac{v-x}{\rho}+\frac{v-w_\xi}{\rho}-C_\xi v\in Bw_\xi.
        \]
        As $B$ is assumed to have a convex graph, this implies that 
        \[
            \Exp{u_\xi}=\frac{v-x}{\rho}+\frac{v-\Exp{w_\xi}}{\rho}-\Exp{C_\xi v}=\frac{v-x}{\rho}+0-Cv\in B(\Exp{w_\xi})=Bv.
        \]
        As $\frac{x-v}{\rho}\in Av$, this yields the wanted conclusion.
        \item[($\supseteq$)] Take $z\in \Zer(A+B+C)$, and pick $a\in Az$ and $b\in Bz$ such that $a+b+Cz=0$. Define $x=z+\rho a$ such that $J_{\rho A}x=z$, and $w_\xi=J_{\rho B}(2z-x-\rho C_\xi z)$, such that 
        \[
            u_\xi=\frac{z-w_\xi}{\rho}+\frac{z-x}{\rho}-C_\xi z=\frac{z-w_\xi}{\rho}-a-C_\xi z\in Bw_\xi.
        \]
        By convexity of $\graph(B)$, it holds that 
        \[
            \Exp{u_\xi}=\frac{z-\Exp{w_\xi}}{\rho}-a-\Exp{C_\xi z}=\frac{z-\Exp{w_\xi}}{\rho}-a-Cz\in B(\Exp{w_\xi}).
        \]
        As $B$ is a monotone operator and $b\in Bz$, it holds that 
        \[
            \left\langle \frac{z-\Exp{w_\xi}}{\rho}-a-Cz-b, \Exp{w_\xi}-z\right\rangle\ge 0.
        \]
        Since $a+b+Cz=0$, this implies that $-\frac1\rho \|z-\Exp{w_\xi}\|^2\ge 0$, which in turn implies that $z=\Exp{w_\xi}$. This may be rearranged into $\Exp{T_\xi x}=x$, which implies that $x\in \Fix(T)$.
        \qedhere
    \end{enumerate}
\end{proof}

We make the assumption that the estimator has finite variance at a fixed-point, being that 
\begin{equation}\tag{MI3-Ass}\label{eq:MI-Var}
    \Exp{\|(C-C_\xi)J_{\rho A}p\|^2}\le \sigma_*^2<+\infty,
\end{equation}
where $p\in \Fix(T)$, and note that, provided $B$ has a convex graph, $J_{\rho A}p$ is a solution of Problem \eqref{eq:MI}.

We note that the operator $T_\xi$ is nonexpansive as each $C_\xi$ is $\tau$-cocoercive and $\rho\in (0,2\tau)$, by \cite[Proposition 3.1]{davis_threeoperator_2017a}. As such, the Problem Assumption \ref{ass:nonexp} is satisfied. Moreover, for some $p\in \Fix(T)$, 
\begin{align*}
    \Exp{\|(T_\xi-T)p\|^2}
    &= \inf_{y\in \H}\Exp{\|T_{\xi}p-y\|^2} \\
    &\le \Exp{\|T_{\xi}p-T_{\text{det}}p\|^2} \\
    &=\Exp{\|J_{\rho B}(2J_{\rho A}p-p-\rho C(J_{\rho A}p))-J_{\rho B}(2J_{\rho A}p-p-\rho C_\xi(J_{\rho A}p))\|^2} \\
    &\le \rho^2\Exp{\| (C_\xi-C)J_{\rho A}p\|^2} \\
    & \le \rho^2\sigma_*^2<+\infty,
\end{align*}
where the first inequality follows by nonexpansiveness of $J_{\rho B}$ and the second by assumption on $C_\xi$. As such, the Variance Assumption \ref{ass:bounded_some_FP} is also verified. 

\begin{corollary}[Application to Algorithm STOS]\label{coro:stos}
    Consider Problem \eqref{eq:MI} and assume that $\graph(B)$ is convex, and assume the variance-at-solution condition \eqref{eq:MI-Var} holds. Let $(x_k, y_k, z_k)$ be generated by the \eqref{eq:STOS} iterations with $\rho\in (0, 2\tau)$. Then, for $\lambda_0\in (0, 1)$,
    \[
        \Exp{\|Tx_{N_K}-x_{N_K}\|}\le \begin{cases}
            \mathcal O(K^{-\frac14}) &\text{if $\lambda_k\equiv \lambda_0K^{-\frac12}$}, \\
            \mathcal O(\ln(K)^{\frac12}K^{-\frac14}) &\text{if $\lambda_k= \lambda_0(k+1)^{-\frac12}\ln(k+e)^{-1}$},
        \end{cases}
    \]
    where $N_K$ is defined in \eqref{eq:def_prob}, and
    \[
        \Exp{\|Tx_{K}-x_K\|}\le \begin{cases}
            \mathcal O(K^{-\frac16}) &\text{if $\lambda_k\equiv \lambda_0K^{-\frac23}$}, \\
            \mathcal O(\ln(K)^{\frac12}K^{-\frac16}) &\text{if $\lambda_k= \lambda_0(k+1)^{-\frac23}$}.
        \end{cases}
    \]
    Moreover, for the non-finite-horizon step-size schedules, $(x_k)$ converges weakly almost surely to a point $p\in \Fix(T)$ such that $J_{\rho A}p\in \Zer(A+B+C)$, and $\|Tx_k-x_k\|\to 0$ almost surely.
\end{corollary}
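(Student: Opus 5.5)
The plan is to deduce Corollary \ref{coro:stos} directly from Corollary \ref{coro:complexity} and Theorem \ref{thm:weak_conv}, once we have checked that the stochastic operators $T_\xi$ in \eqref{eq:TOS_xi} fit the abstract framework. Most of this verification is already in the text above the statement; the proof will collect it and then specialise the step-size exponents.

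\textbf{Step 1: the assumptions.} For the Problem Assumption \ref{ass:nonexp}:
\begin{itemize}
\item Each $T_\xi$ is nonexpansive by \cite[Proposition 3.1]{davis_threeoperator_2017a}, because $C_\xi$ is $\tau$-cocoercive and $\rho\in(0,2\tau)$.
\item Measurability and integrability of $\xi\mapsto T_\xi x$ follow from those of $\xi\mapsto C_\xi x$, composed with the continuous maps $J_{\rho A}$ and $J_{\rho B}$.
\item Nonemptiness of $\Fix(T)$ comes from Lemma \ref{lem:link}: the convex-graph hypothesis and $\Zer(A+B+C)\neq\emptyset$ give $J_{\rho A}(\Fix(T))=\Zer(A+B+C)\neq\emptyset$.
\end{itemize}
For the Variance Assumption \ref{ass:bounded_some_FP}, the chain of inequalities displayed before the corollary gives the constant $\rho^2\sigma_*^2$. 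It uses two facts: the variance is the minimal mean-square deviation, and $J_{\rho B}$ is nonexpansive.

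\textbf{Step 2: the rates.} Apply Corollary \ref{coro:complexity} with $\sigma_*^2$ replaced by $\rho^2\sigma_*^2$.
\begin{itemize}
\item Random iterate, constant schedule: take item 3 with $a=1/2$. This gives $\min(a,1-a)/2=1/4$, i.e.\ $\mathcal O(K^{-1/4})$.
\item Random iterate, decreasing schedule: take item 2 with $a=1/2$. This gives $\mathcal O(K^{-1/4}\ln(K)^{1/2})$.
\item Last iterate, constant schedule: take item 3 with $a=2/3$. This gives $\min(2a-1,1-a)/2=1/6$.
\item Last iterate, decreasing schedule: take item 1 with $a=2/3$. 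This gives $\mathcal O(\ln(K)^{1/2}K^{-1/6})$.
\end{itemize}

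\textbf{Step 3: the asymptotics.} For the non-finite-horizon schedules, $\lambda_0(k+1)^{-1/2}\ln(k+e)^{-1}$ and $\lambda_0(k+1)^{-2/3}$, Remark \ref{rem:decreasing_relaxation_parameters} shows that the Parameter Assumption \ref{ass:stepsize} holds. Theorem \ref{thm:weak_conv} then gives, almost surely, $\|Tx_k-x_k\|\to0$ and $x_k\rightharpoonup p\in\Fix(T)$. Lemma \ref{lem:link} turns this into $J_{\rho A}p\in\Zer(A+B+C)$.

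\textbf{Main obstacle.} I expect the only delicate point to be an ordering issue in \eqref{eq:MI-Var}: the condition is stated at some $p\in\Fix(T)$, which presupposes that $\Fix(T)$ is nonempty. The proof must therefore establish nonemptiness (via Lemma \ref{lem:link}) before invoking the variance bound. Lemma \ref{lem:bounded_all_FP} shows that the choice of $p$ is irrelevant. Everything else is routine substitution.
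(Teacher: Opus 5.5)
Your proposal is correct and follows the same route as the paper. You verify the Problem and Variance Assumptions for $T_\xi$, with nonexpansiveness from \cite[Proposition 3.1]{davis_threeoperator_2017a} and the constant $\rho^2\sigma_*^2$ from the displayed chain using nonexpansiveness of $J_{\rho B}$; you then read off the rates from Corollary \ref{coro:complexity} with the exponents you list, and the almost sure statements from Theorem \ref{thm:weak_conv} combined with Lemma \ref{lem:link}. Your explicit deduction of $\Fix(T)\neq\emptyset$ from Lemma \ref{lem:link} before invoking \eqref{eq:MI-Var} is a point the paper leaves implicit, and you handle it correctly.
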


To the best of our knowledge, only two prior works have considered the Stochastic Three-Operator Splitting algorithm, both in the functional setting. The first is \cite{yurtsever_stochastic_2016}, which considers the strongly convex setting, and hence does not offer a valid comparison. The second is \cite{yurtsever_three_2021}, which shows convergence in function values at a rate of $\mathcal O(K^{-1/2})$, which is not directly comparable to our residual terms. Both of these works operate in the uniformly bounded variance setting, and we are not aware of any prior works that do not make use of that assumption.

In Section \ref{ssec:sltos}, we study an algorithm overcoming the assumption of $\graph(B)$ being convex. To conclude this subsection, we analyze the case of $B=0$, where this assumption is automatically satisfied. Then the problem reduces to finding $x_*\in \Zer(A+C)$, where $A\colon \H\to 2^\H$ is a maximally monotone operator and $C\colon \H\to \H$ is a $\tau$-cocoercive operator, and Assumption \eqref{eq:MI-Var} reduces to finite variance of $C_\xi$ at a solution of the problem, by Lemma \ref{lem:link}. Moreover, Algorithm \eqref{eq:STOS} reduces to a stochastic version of the \textit{Backward-Forward splitting algorithm} \citep{attouch_backward_2018}, given by, with $x_0\in \H$ and for $k\ge 0$,
\begin{equation}\tag{SBF}\label{eq:SBF}
    x_{k+1}=(1-\lambda_k)x_k+\lambda_k(I-\rho C_{\xi_k})J_{\rho A}x_k, \quad \text{where $\xi_k\stackrel{\text{iid}}\sim \mathcal D$.}
\end{equation} Specifically, we obtain the following convergence result.
\begin{corollary}[Application to Algorithm SBF]\label{coro:sbf}
    Consider Problem \eqref{eq:MI} with $B=0$ and assume the variance-at-solution condition \eqref{eq:MI-Var} holds. Let $(x_k)$ be generated by the \eqref{eq:SBF} iterations with $\rho\in (0, 2\tau)$. Then, for $\lambda_0\in (0, 1)$,
    \[
        \Exp{\|Tx_{N_K}-x_{N_K}\|}\le \begin{cases}
            \mathcal O(K^{-\frac14}) &\text{if $\lambda_k\equiv \lambda_0K^{-\frac12}$}, \\
            \mathcal O(\ln(K)^{\frac12}K^{-\frac14}) &\text{if $\lambda_k= \lambda_0(k+1)^{-\frac12}\ln(k+e)^{-1}$},
        \end{cases}
    \]
    where $N_K$ is defined in \eqref{eq:def_prob}, and
    \[
        \Exp{\|Tx_{K}-x_K\|}\le \begin{cases}
            \mathcal O(K^{-\frac16}) &\text{if $\lambda_k\equiv \lambda_0K^{-\frac23}$}, \\
            \mathcal O(\ln(K)^{\frac12}K^{-\frac16}) &\text{if $\lambda_k= \lambda_0(k+1)^{-\frac23}$}.
        \end{cases}
    \]
    Moreover, for the non-finite-horizon step-size schedules, $(x_k)$ converges weakly almost surely to a point $p\in \Fix(T)$ such that $J_{\rho A}p\in \Zer(A+C)$, and $\|Tx_k-x_k\|\to 0$ almost surely.
\end{corollary}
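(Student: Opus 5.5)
The plan is to obtain Corollary \ref{coro:sbf} as the special case $B=0$ of Corollary \ref{coro:stos}. Everything therefore reduces to checking that the hypotheses of that corollary, and the structural facts behind it, hold when $B=0$.

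First, with $B=0$ we have $J_{\rho B}=I$. The operator in \eqref{eq:TOS_xi} then becomes
\[
T_\xi=I-J_{\rho A}+2J_{\rho A}-I-\rho C_\xi\circ J_{\rho A}=(I-\rho C_\xi)\circ J_{\rho A}.
\]
Consequently the update $x_{k+1}=(1-\lambda_k)x_k+\lambda_kT_{\xi_k}x_k$ is exactly \eqref{eq:SBF}.

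Second, $\graph(B)=\H\times\{0\}$ is a linear subspace, hence convex, and $B=0$ is maximally monotone. Lemma \ref{lem:link} therefore applies and gives $J_{\rho A}(\Fix(T))=\Zer(A+C)$. In this case the lemma can even be checked directly: $T$ is affine in the stochastic part, so $T=(I-\rho C)\circ J_{\rho A}$, which is the deterministic backward-forward operator.

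Third, the standing assumptions carry over.
\begin{itemize}
\item \emph{Nonexpansiveness.} Each $T_\xi$ is nonexpansive, since $J_{\rho A}$ is firmly nonexpansive and $I-\rho C_\xi$ is nonexpansive for $\rho\in(0,2\tau)$ by $\tau$-cocoercivity. This gives Assumption \ref{ass:nonexp}. It also follows from the cited result of Davis--Yin.
\item \emph{Fixed points.} $\Fix(T)\neq\emptyset$ because $\Zer(A+C)\neq\emptyset$: for $z$ in the zero set, the point $x=z+\rho a$ with $a=-Cz\in Az$ is fixed.
\item \emph{Variance.} At $p\in\Fix(T)$ we have $(T_\xi-T)p=-\rho(C_\xi-C)J_{\rho A}p$. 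Hence \eqref{eq:MI-Var} gives Assumption \ref{ass:bounded_some_FP} with constant $\rho^2\sigma_*^2$, and here the bound holds with equality rather than via the infimum argument.
\end{itemize}

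Finally, the rates are read off from Corollary \ref{coro:complexity}.
\begin{itemize}
\item Item 3 with $a=1/2$ gives $\mathcal O(K^{-1/4})$ for the random iterate.
\item Item 2 with $a=1/2$ gives $\mathcal O(\ln(K)^{1/2}K^{-1/4})$.
\item Item 3 with $a=2/3$ gives $\min(2a-1,1-a)/2=1/6$ for the last iterate.
\item Item 1 with $a=2/3$ gives $\mathcal O(\ln(K)^{1/2}K^{-1/6})$.
\end{itemize}
Both horizon-independent schedules satisfy Assumption \ref{ass:stepsize} by Remark \ref{rem:decreasing_relaxation_parameters}. Theorem \ref{thm:weak_conv} then yields almost sure weak convergence of $x_k$ to some $p\in\Fix(T)$ and $\|Tx_k-x_k\|\to0$ almost surely, and Lemma \ref{lem:link} identifies $J_{\rho A}p\in\Zer(A+C)$.

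The only step needing real care is the fixed-point identification: we must confirm that Lemma \ref{lem:link} is stated for arbitrary maximally monotone $B$ with convex graph, so $B=0$ qualifies. The rest is direct substitution.
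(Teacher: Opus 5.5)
Your proposal is correct and follows the paper's own route: the paper also obtains this corollary by specializing Corollary~\ref{coro:stos} to $B=0$, where $J_{\rho B}=I$ and the graph of $B$ is trivially convex. Lemma~\ref{lem:link} then gives $J_{\rho A}(\Fix(T))=\Zer(A+C)$, the rates come from Corollary~\ref{coro:complexity}, and the asymptotic claims come from Theorem~\ref{thm:weak_conv}. Your extra observations, that $T$ equals the deterministic backward-forward operator and that the variance bound holds as an identity, are valid refinements within the same argument.
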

This result is, to the best of our knowledge, the first result on a stochastic version of the Backward-Forward splitting algorithm under our variance assumption. Uniform boundedness assumptions have been studied in this context previously \cite{tran-dinh_vfosa_2025}. Stochastic Forward-Backward splitting, based on the opposite operator order, was studied in \cite{combettes_stochastic_2016} under different noise assumptions.

\subsection{Stochastic Lifted Three-Operator Splitting}\label{ssec:sltos}

To overcome the strong assumption of convex graph of $B$ in Section \ref{ssec:stos}, we introduce a lifted version of the algorithm. Observe that a point $x\in \H$ solves Problem \eqref{eq:MI} if, and only if, there exists a point $u\in \H$ such that 
\begin{equation}\tag{MI3-Lifted}\label{eq:mi3-lifted}
    \begin{pmatrix}
        x \\ u
    \end{pmatrix} \in \Zer\left(\underbrace{\begin{pmatrix}
        A & 0 \\ 0 & B^{-1}
    \end{pmatrix}}_{\mathcal A}+\underbrace{\begin{pmatrix}
        0 & I \\ -I & 0
    \end{pmatrix}}_{\mathcal B}+\underbrace{\begin{pmatrix}
        C & 0 \\ 0 & 0
    \end{pmatrix}}_{\mathcal C}\right).
\end{equation}
For $A, B$ maximally monotone operators, $B^{-1}$, and therefore also $\mathcal A$, is maximally monotone \cite{bauschke_convex_2017}. Moreover, $\mathcal B$ is linear, continuous and skew-symmetric, and therefore also maximally monotone. Finally, $\mathcal C$ is $\tau$-cocoercive since $C$ is. Moreover, without additional assumptions, $\mathcal B$ is linear, and therefore has a convex graph \cite{alves_maximal_2009}. 

Define $\mathcal C_\xi$ as the matrix with upper-left entry $C_\xi$, the stochastic evaluation of $C$, and $0$ everywhere else. The Stochastic Three-Operator Splitting operator from Section \ref{ssec:stos} applied to Problem \eqref{eq:mi3-lifted} then yields
\[
    \mathcal T_\xi=I-J_{\rho\mathcal A}+J_{\rho B}\circ (2J_{\rho \mathcal A}-I-\rho \mathcal C_{\xi}\circ J_{\rho \mathcal A}).
\]
Lemma \ref{lem:link} guarantees that, for $\mathcal T=\Exp{\mathcal T_\xi}$, it holds that $J_{\rho \mathcal A}(\Fix(\mathcal T))=\Zer(\mathcal A+\mathcal B+\mathcal C)$. Specifically, for any $(\hat v, \hat u)\in \Fix(\mathcal T)$, it holds that $J_{\rho\mathcal A}(\hat v, \hat u)=(J_{\rho A}\hat v, J_{\rho B^{-1}}\hat u)\in \Zer(\mathcal A+\mathcal B+\mathcal C)$, so that $J_{\rho A}\hat v\in \Zer(A+B+C)$, which thus solves Problem \eqref{eq:MI}.

Applying Algorithm \ref{alg:SKM} with the operator $\mathcal T_\xi$ gives rise to the \textit{Stochastic Lifted Three-Operator Splitting} method, given by, with $x_0=(v_0, u_0)\in \H\times \H$ and for $k\ge 0$,
\begin{equation}\tag{SLTOS}\label{eq:sltos}
    \begin{cases}
        (y_{k}^v, y_{k}^u) &= (J_{\rho A}(v_k), u_k-\rho J_{\rho^{-1}B}(u_k/\rho)), \\
        (z_{k}^v, z_{k}^u) &= (2y_{k}^v-v_k-\rho C_{\xi_k}y_k^v, 2y_k^u-u_k), \quad \text{where $\xi_k\stackrel{\text{iid}}\sim \mathcal D$,} \\
        (w_k^v, w_k^u) &= (1+\rho^2)^{-1}(z_k^v-\rho z_k^u, \rho z_k^v+z_k^u), \\
        (v_{k+1}, u_{k+1}) &= (v_k-\lambda_k(y_k^v-w_k^v), u_k-\lambda_k(y_k^u-w_k^u)).
    \end{cases}
\end{equation}
Note that we used the relation $J_{\rho B^{-1}}(x)=x-\rho J_{\rho^{-1}B}(x/\rho)$. 

Finally, Assumption \eqref{eq:MI-Var} reduces to assuming there exists a solution $\hat x$ to Problem \eqref{eq:MI} so that 
\begin{equation}\tag{Lifted-MI3-Ass}\label{eq:Lifted-MI-Var}
    \Exp{\|(C-C_\xi)\hat x\|^2}\le \sigma_*^2<+\infty,
\end{equation}
which, by the same reasoning as in Section \ref{ssec:stos}, yields the Variance Assumption \ref{ass:bounded_some_FP}. Specifically, we obtain the following result.
\begin{corollary}[Application to Algorithm SLTOS]\label{coro:sltos}
    Consider Problem \eqref{eq:MI} and assume the variance-at-solution condition \eqref{eq:Lifted-MI-Var} holds. Let $(x_k)$ be generated by the \eqref{eq:sltos} iterations with $\rho\in (0, 2\tau)$. Then, for $\lambda_0\in (0, 1)$,
    \[
        \Exp{\|\mathcal Tx_{N_K}-x_{N_K}\|}\le \begin{cases}
            \mathcal O(K^{-\frac14}) &\text{if $\lambda_k\equiv \lambda_0K^{-\frac12}$}, \\
            \mathcal O(\ln(K)^{\frac12}K^{-\frac14}) &\text{if $\lambda_k= \lambda_0(k+1)^{-\frac12}\ln(k+e)^{-1}$},
        \end{cases}
    \]
    where $N_K$ is defined in \eqref{eq:def_prob}, and
    \[
        \Exp{\|\mathcal Tx_{K}-x_K\|}\le \begin{cases}
            \mathcal O(K^{-\frac16}) &\text{if $\lambda_k\equiv \lambda_0K^{-\frac23}$}, \\
            \mathcal O(\ln(K)^{\frac12}K^{-\frac16}) &\text{if $\lambda_k= \lambda_0(k+1)^{-\frac23}$}.
        \end{cases}
    \]
    Moreover, for the non-finite-horizon step-size schedules, $(x_k)$ converges weakly almost surely to a point $\hat x=(\hat v, \hat u)\in \Fix(\mathcal T)$ satisfying $J_{\rho A}(\hat v)\in \Zer(A+B+C)$, and $\|\mathcal Tx_k-x_k\|\to 0$ almost surely.
\end{corollary}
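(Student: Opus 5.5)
The corollary follows by checking the hypotheses of the general results for the lifted operator and then invoking Corollary \ref{coro:complexity} and Theorem \ref{thm:weak_conv}. This mirrors Corollary \ref{coro:stos}, with $(\mathcal A,\mathcal B,\mathcal C_\xi)$ on $\H\times\H$ in place of $(A,B,C_\xi)$. The check has three parts: the Problem Assumption \ref{ass:nonexp} for $\mathcal T_\xi$, the Variance Assumption \ref{ass:bounded_some_FP} with constant $\rho^2\sigma_*^2$, and the identification of the weak limits through Lemma \ref{lem:link}.

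\emph{Problem Assumption.} First, $\mathcal A$ is maximally monotone, since $B^{-1}$ is. The operator $\mathcal B$ is linear, bounded and skew, hence maximally monotone with convex graph. Each $\mathcal C_\xi$ is $\tau$-cocoercive because $\langle \mathcal C_\xi z-\mathcal C_\xi z', z-z'\rangle=\langle C_\xi x-C_\xi x',x-x'\rangle\ge\tau\|C_\xi x-C_\xi x'\|^2=\tau\|\mathcal C_\xi z-\mathcal C_\xi z'\|^2$. By \cite[Proposition 3.1]{davis_threeoperator_2017a}, $\mathcal T_\xi$ is then nonexpansive for $\rho\in(0,2\tau)$. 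Measurability and integrability of $\xi\mapsto \mathcal T_\xi z$ are inherited from $C_\xi$, since the resolvents are continuous (indeed nonexpansive).

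\emph{Nonemptiness of $\Fix(\mathcal T)$.} Take a solution $\hat x$ of \eqref{eq:MI} as in \eqref{eq:Lifted-MI-Var}, and pick $b\in B\hat x$ with $-b-C\hat x\in A\hat x$. Set $\hat u=b$, so $\hat x\in B^{-1}\hat u$, and $(\hat x,\hat u)\in\Zer(\mathcal A+\mathcal B+\mathcal C)$. The inclusion ($\supseteq$) in Lemma \ref{lem:link}, applied with convex-graph operator $\mathcal B$, produces $p\in\Fix(\mathcal T)$ with $J_{\rho\mathcal A}p=(\hat x,\hat u)$. So $\Fix(\mathcal T)\neq\emptyset$.

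\emph{Variance Assumption.} For this $p$, I will repeat the chain of inequalities from Section \ref{ssec:stos}. Replace the expectation with $T_{\det}$ (the infimum characterization of variance), then use nonexpansiveness of $J_{\rho\mathcal B}$. This gives
\[
\Exp{\|(\mathcal T_\xi-\mathcal T)p\|^2}\le\rho^2\Exp{\|(\mathcal C_\xi-\mathcal C)J_{\rho\mathcal A}p\|^2}=\rho^2\Exp{\|(C_\xi-C)\hat x\|^2}\le\rho^2\sigma_*^2.
\]
The only subtlety is that the first component of $J_{\rho\mathcal A}p$ is exactly $\hat x$, which holds by construction.

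\emph{Conclusion.} The rates follow from items 2 and 3 of Corollary \ref{coro:complexity}. Take $a=1/2$ for the random iterate, and $a=2/3$ for the last iterate in both the constant and the polynomial schedules, giving exponents $K^{-1/4}$ and $K^{-1/6}$ with the stated logarithmic factors. Theorem \ref{thm:weak_conv} gives almost sure weak convergence of $x_k=(v_k,u_k)$ to some $(\hat v,\hat u)\in\Fix(\mathcal T)$ and $\|\mathcal Tx_k-x_k\|\to0$. Lemma \ref{lem:link} (inclusion $\subseteq$) gives $J_{\rho\mathcal A}(\hat v,\hat u)\in\Zer(\mathcal A+\mathcal B+\mathcal C)$, whose first component $J_{\rho A}\hat v$ solves \eqref{eq:MI}.

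The main obstacle I anticipate is bookkeeping rather than analysis. One must confirm that \eqref{eq:sltos} is really the SKM iteration for $\mathcal T_\xi$, with the resolvent of $\rho\mathcal B$ equal to $(1+\rho^2)^{-1}\begin{pmatrix}1&-\rho\\\rho&1\end{pmatrix}$ and the $B^{-1}$ resolvent identity. One must also ensure the variance point lies in $\Fix(\mathcal T)$, which is why I construct $p$ from $\hat x$ via Lemma \ref{lem:link} rather than picking an arbitrary fixed point.
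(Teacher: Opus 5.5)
Your proposal is correct and takes essentially the paper's route. You verify the Problem and Variance Assumptions for the lifted operator (Davis--Yin nonexpansiveness, the convex graph of the skew operator $\mathcal B$ in Lemma~\ref{lem:link}, and the same variance chain as in Section~\ref{ssec:stos}), then invoke Corollary~\ref{coro:complexity} and Theorem~\ref{thm:weak_conv}. Your explicit construction of $p\in\Fix(\mathcal T)$ with $J_{\rho\mathcal A}p=(\hat x,\hat u)$ usefully spells out the step the paper only calls ``the same reasoning.'' One small citation slip: the last-iterate rate for $\lambda_k=\lambda_0(k+1)^{-2/3}$ comes from item~1 of Corollary~\ref{coro:complexity}, not items~2 or~3.
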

\section{Conclusion}

In this paper, we established the convergence of the Stochastic Krasnoselskii-Mann iterations for expected fixed-point problems under the weak assumption of finite variance at a single fixed point, relaxing the typically-made uniformly bounded variance assumption. Under this weaker assumption, we recover the best-known $\mathcal O(\varepsilon^{-4})$ stochastic oracle complexity for the convergence of a random expected residual, show the rate holds almost surely for the running minimum residual, derive a $\mathcal O(\varepsilon^{-6})$ oracle complexity for the last iterate expected residual, and prove almost sure weak convergence of the iterates to a fixed point. These results show that uniformly bounded variance is not intrinsic in the analysis of stochastic fixed point methods, and can be replaced by a significantly weaker local condition.

Furthermore, we demonstrate the applicability of our results by considering four representative examples. For the case of Stochastic Gradient Descent, we recover known convergence results. However, for Stochastic Three-Operator Splitting and Stochastic Backward-Forward Splitting, we obtain results which are the first to bypass the uniformly bounded variance assumption. We additionally introduce a novel Stochastic Lifted Three-Operator Splitting, which removes the convex graph assumption required by our Stochastic Three-Operator Splitting method.

\paragraph{Outlook.} An interesting direction of future research is to understand how the stochasticity in Algorithm \ref{alg:SKM} interacts with inertia. Specifically, in order to control the stochasticity, we made the assumption that the relaxation parameters decrease to $0$. However, for inertial results, it is typical that the relaxation sequences need to be uniformly bounded away from $0$ \cite{cortild_krasnoselskii_2025,maulen_inertial_2024}.

\paragraph{Acknowledgments.} 
Daniel Cortild acknowledges the support of the Clarendon Funds Scholarship. 
Coralia Cartis' work was supported by the Hong Kong Innovation and Technology Commission (InnoHK Project CIMDA) and by the EPSRC grant EP/Y028872/1, Mathematical Foundations of Intelligence: An “Erlangen Programme” for AI. 
Juan Peypouquet benefited from the support of the FMJH Program Gaspard Monge for optimization and operations research and their interactions with data science.




\appendix

\section{Auxiliary Results}\label{sec:aux}

This section collects some known facts about real sequences and random variables, which are central to our convergence results. The proofs are included for the reader's convenience.

We start by recalling the well-known Robbins-Siegmund Almost-Supermartingale Theorem \cite{robbins_convergence_1971}.

\begin{theorem}[Robbins-Siegmund Almost-Supermartingale Theorem]\label{lem:robbins_siegmund}\label{thm:robbins_siegmund}
    Let $(X_k), (Y_k)$ and $(Z_k)$ be nonnegative random variables adapted to the filtration $\mathcal F_k$. Let $\eta_k$ be a nonnegative real sequence such that $\sum_{k=0}^\infty\eta_k<+\infty$. Assume that, for every $k\ge 0$, almost surely, 
    \[
        \Exp{Y_{k+1}|\mathcal F_k}\le (1+\eta_k)Y_k-X_k+Z_k\quad \text{and}\quad \sum_{k=0}^\infty Z_k<+\infty,
    \]
    Then we have that $\sum_{k=0}^\infty X_k<+\infty$ almost surely and that $Y_k$ converges almost surely.
\end{theorem}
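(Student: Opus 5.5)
The plan is to follow the classical route: reduce everything to a pathwise statement about nonnegative extended-real random variables, using nonnegative supermartingale convergence as the main tool.

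\textbf{Step 1 (normalization).} Set $\pi_k=\prod_{i=0}^{k-1}(1+\eta_i)^{-1}$. Since $\sum\eta_i<+\infty$, the sequence $(\pi_k)$ is nonincreasing and converges to some $\pi_\infty>0$. Define
\[
    Y_k'=\pi_kY_k,\quad X_k'=\pi_{k+1}X_k,\quad Z_k'=\pi_{k+1}Z_k.
\]
Multiplying the hypothesis by $\pi_{k+1}$ gives
\[
    \Exp{Y_{k+1}'|\mathcal F_k}\le Y_k'-X_k'+Z_k'.
\]
Because $\pi_k\in[\pi_\infty,1]$, the conclusions for the primed sequences are equivalent to those for the original ones. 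It therefore suffices to treat the case $\eta_k\equiv 0$.

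\textbf{Step 2 (supermartingale construction).} Consider
\[
    U_k=Y_k+\sum_{i<k}X_i-\sum_{i<k}Z_i.
\]
By the reduced inequality, $U_k$ satisfies $\Exp{U_{k+1}|\mathcal F_k}\le U_k$. It is not bounded below, because only $\sum_k Z_k<+\infty$ almost surely is known, not a deterministic bound. The standard remedy is localization. For $a>0$, define the stopping time
\[
    \tau_a=\inf\Bigl\{k:\ \sum_{i\le k}Z_i>a\Bigr\},
\]
which is a stopping time since $Z_k$ is $\mathcal F_k$-measurable. The stopped process $U_{k\wedge\tau_a}+a$ is then a nonnegative supermartingale, because on $\{k<\tau_a\}$ the partial sum $\sum_{i<k\wedge\tau_a}Z_i$ is at most $a$. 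By the nonnegative supermartingale convergence theorem, it converges almost surely to a finite limit.

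\textbf{Step 3 (passing to the limit).} On the event $\{\tau_a=\infty\}$, the process $U_k$ therefore converges to a finite limit. Since $\sum Z_i$ converges there, $Y_k+\sum_{i<k}X_i$ also converges to a finite limit. Because $Y_k\ge0$ and the partial sums $\sum_{i<k}X_i$ are nondecreasing, $\sum_{i<k}X_i$ is bounded and hence convergent, which gives $\sum X_i<+\infty$. Consequently $Y_k$ converges as well. Finally,
\[
    \Bigl\{\sum Z_i<\infty\Bigr\}=\bigcup_{a\in\N}\{\tau_a=\infty\},
\]
and this event has probability one, which yields both conclusions almost surely. Translating back through $\pi_k\to\pi_\infty>0$ gives convergence of the original $Y_k$ and summability of the original $X_k$.

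The main obstacle is Step 2. The naive process $U_k$ is a supermartingale but is neither integrable nor bounded below, since $\sum_k Z_k$ is random and $Y_k$ need not be integrable. I would handle this with the stopping-time localization and, if needed, by working with generalized conditional expectations of nonnegative variables, checking that the stopped process is a genuine nonnegative supermartingale. In the paper's applications $Z_k$ is deterministic and summable, so this step simplifies to the integrable case.
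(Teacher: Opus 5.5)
Your proof is correct. The paper gives no argument for this theorem and only cites \cite{robbins_convergence_1971}; your route (normalizing by $\pi_k=\prod_{i<k}(1+\eta_i)^{-1}$, compensating to the supermartingale $U_k$, and localizing with the stopping time $\tau_a$ on the partial sums of $Z_k$) is essentially the original Robbins--Siegmund proof.

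The integrability issue you flag is harmless, for two reasons. First, on $\{\tau_a>k\}$ you can write $U_{k+1}+a=Y_{k+1}+\sum_{i\le k}X_i+\bigl(a-\sum_{i\le k}Z_i\bigr)$ as a sum of nonnegative terms, so the generalized conditional expectation applies termwise; on the complementary event, $\sum_{i<\tau_a}Z_i\le a$ holds by minimality of $\tau_a$. Second, if $Y_0$ is not integrable, you can further restrict to the $\mathcal F_0$-measurable events $\{Y_0\le m\}$, $m\in\N$, on which the stopped process is an integrable nonnegative supermartingale.
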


\begin{lemma}\label{lem:small_o}
    Let $(x_k), (\eta_k)$ be real nonnegative sequences such that 
    \[
        \sum_{k=0}^\infty \eta_kx_k<+\infty\quad\text{and}\quad \sum_{k=0}^\infty \eta_k=+\infty.
    \]
    Then, as $K\to \infty$,
    \[
        \min_{k=0, \ldots, K-1}x_k= o\left(\frac{1}{\sum_{k=0}^{K-1}\eta_k}\right).
    \]
\end{lemma}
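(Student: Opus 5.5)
I will argue via the tails of the convergent series. Set $S_K=\sum_{k=0}^{K-1}\eta_k$ and $m_K=\min_{k\le K-1}x_k$. Since $\sum_k \eta_k x_k<+\infty$, the tail $R_j=\sum_{k\ge j}\eta_kx_k$ tends to $0$ as $j\to\infty$. The goal is to show $m_K S_K\to 0$, that is, for every $\varepsilon>0$ eventually $m_KS_K\le\varepsilon$.

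The key step is to split the horizon at an intermediate index. Fix $\varepsilon>0$ and choose $j$ such that $R_j\le \varepsilon/2$. For $K>j$, the minimum over all indices is at most the minimum over $k\in\{j,\dots,K-1\}$. Bounding a minimum by a weighted average with nonnegative weights $\eta_k$ gives
\[
m_K\sum_{k=j}^{K-1}\eta_k\le \sum_{k=j}^{K-1}\eta_kx_k\le R_j\le \varepsilon/2 .
\]
This is valid whenever $\sum_{k=j}^{K-1}\eta_k>0$, and since $\sum\eta_k=+\infty$ this holds for large $K$.

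Next I compare the partial tail with the full sum: $\sum_{k=j}^{K-1}\eta_k=S_K-S_j$. Because $S_K\to\infty$ while $S_j$ is fixed, for $K$ large we have $S_K-S_j\ge S_K/2$. Hence $m_KS_K\le 2\cdot\varepsilon/2=\varepsilon$ for all $K$ sufficiently large. Since $\varepsilon>0$ was arbitrary, $m_K=o(1/S_K)$.

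There is no serious obstacle here. The only point needing care is the order of quantifiers: first choose $j$ from $\varepsilon$, then take $K$ large depending on both $j$ and $\varepsilon$, so that the fixed head $S_j$ becomes negligible relative to $S_K$. I also need to note that $m_K\ge0$, so the conclusion is indeed a little-$o$ statement about a nonnegative quantity. In the paper's application, the argument is applied pathwise on the almost sure event from Lemma \ref{lem:bounded_its}.
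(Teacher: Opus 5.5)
Your proof is correct and follows essentially the same route as the paper: split at an intermediate index, bound $m_K(S_K-S_j)$ by the tail $\sum_{k\ge j}\eta_kx_k$, and use $S_K\to\infty$ to make the head $S_j$ negligible. The only difference is cosmetic. You run an explicit $\varepsilon$ argument with the factor $2$ coming from $S_K-S_j\ge S_K/2$. The paper instead takes $\limsup_{K\to\infty}$ in $S_Km_K\le (1-S_N/S_K)^{-1}\sum_{k=N}^{K-1}\eta_kx_k$ and then lets $N\to\infty$.
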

\begin{proof}
    Define $S_K=\sum_{k=0}^{K-1}\eta_k$ and $m_K=\min_{k=0, \ldots, K-1}x_k$. Fix a $K\ge 1$ and a $N< K$. Then it holds, for all $k=N, \ldots, K-1$, that $m_K\le x_k$. Specifically, 
    \[
        (S_K-S_N)m_K=\sum_{k=N}^{K-1}\eta_km_K\le \sum_{k=N}^{K-1}\eta_kx_k,
    \]
    which implies that 
    \[
        S_Km_K\le \frac{S_K}{S_K-S_N}\cdot \sum_{k=N}^{K-1}\eta_kx_k=\frac{1}{1-\frac{S_N}{S_K}}\cdot \sum_{k=N}^{K-1}\eta_kx_k.
    \]
    Taking the $\limsup$ on both sides yields that 
    \[
        \limsup_{K\to \infty}S_Km_K\le \sum_{k=N}^{\infty}\eta_kx_k,
    \]
    since $S_K\to \infty$. As $\sum_{k=0}^\infty \eta_kx_k<+\infty$, taking the limit $N\to \infty$ yields that $\limsup_{K\to\infty}S_Km_K\le 0$, which means that $m_K=o(1/S_K)$, as wanted. 
\end{proof}

Finally, we will make use of \cite[Lemma 1]{bremen79_almost_2020}, which was originally used to show almost sure last-iterate convergence of SGD.

\begin{lemma}\label{lem:to_zero}
    Let $(r_k)$ and $(\eta_k)$ be nonnegative sequences in $\R$ such that $\sum_{k=0}^\infty \eta_kr_k^2<+\infty$ and that $\sum_{k=0}^\infty\eta_k=+\infty$. Assume there exists an $L\ge 0$ such that, for all $K\ge 0$ and $\tau\ge 0$,
    \[
        |r_{K+\tau}-r_K|\le L\cdot \left(\sum_{k=K}^{K+\tau-1}\eta_kr_k+\left\|\sum_{k=K}^{K+\tau-1}\eta_kw_k\right\|\right),
    \]
    where $(w_k)$ is a sequence in $\H$ such that $(\sum_{k=0}^K\eta_kw_k)_K$ converges in $\H$. Then $(r_k)$ converges to $0$.
\end{lemma}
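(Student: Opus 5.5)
The plan is to argue by contradiction using \emph{downcrossings}, that is, stretches where $(r_k)$ falls from above $2\varepsilon$ to below $\varepsilon$. The advantage of downcrossings is that every index in the summation window then satisfies $r_k\ge\varepsilon$. This lets us dominate $\eta_kr_k$ by $\eta_kr_k^2/\varepsilon$ with no control on the size of $\eta_k$.

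\textbf{Step 1: $\liminf_k r_k=0$.} Suppose instead that $r_k\ge c>0$ for all $k\ge k_0$. Then
\[
\sum_{k\ge k_0}\eta_kr_k^2\ge c^2\sum_{k\ge k_0}\eta_k=+\infty,
\]
which contradicts $\sum_k\eta_kr_k^2<+\infty$.

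\textbf{Step 2: set up the contradiction.} Assume $r_k\not\to0$. Then $\limsup_k r_k>0$, so we can fix $\varepsilon>0$ with $r_k>2\varepsilon$ for infinitely many $k$. By Step 1, $r_k<\varepsilon$ also holds for infinitely many $k$. We build indices $n_1<m_1<n_2<m_2<\cdots$ inductively:
\begin{itemize}
\item $n_j$ is any index larger than $m_{j-1}$ with $r_{n_j}>2\varepsilon$;
\item $m_j$ is the first index after $n_j$ with $r_{m_j}<\varepsilon$.
\end{itemize}
By minimality of $m_j$, we have $r_k\ge\varepsilon$ for every $k\in\{n_j,\dots,m_j-1\}$. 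The windows $\{n_j,\dots,m_j-1\}$ are pairwise disjoint.

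\textbf{Step 3: apply the hypothesis on each window.} Take $K=n_j$ and $\tau=m_j-n_j$ in the assumed inequality. Since $r_{n_j}-r_{m_j}>\varepsilon$,
\[
\varepsilon< L\Big(\sum_{k=n_j}^{m_j-1}\eta_kr_k+\Big\|\sum_{k=n_j}^{m_j-1}\eta_kw_k\Big\|\Big).
\]
On the window $r_k\ge\varepsilon$, so $\eta_kr_k\le \eta_kr_k^2/\varepsilon$. Hence
\[
\varepsilon< \frac{L}{\varepsilon}\sum_{k=n_j}^{m_j-1}\eta_kr_k^2+L\,\delta_j,
\qquad
\delta_j:=\sup_{m>n\ge n_j}\Big\|\sum_{k=n}^{m-1}\eta_kw_k\Big\|.
\]
The series $\sum_k\eta_kw_k$ converges in $\H$, so its partial sums are Cauchy and $\delta_j\to0$. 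Consequently, for all large $j$,
\[
\sum_{k=n_j}^{m_j-1}\eta_kr_k^2\ge \frac{\varepsilon^2}{2L}.
\]
Summing over the infinitely many disjoint windows gives $\sum_k\eta_kr_k^2=+\infty$, a contradiction. (If $L=0$, the displayed inequality already reads $\varepsilon<0$, which is immediate.) Therefore $r_k\to0$.

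\textbf{Main obstacle.} The only delicate point is that $\eta_k$ is not assumed bounded or vanishing. An upcrossing argument, from below $\varepsilon$ up to above $2\varepsilon$, would include the starting index where $r_k<\varepsilon$. At that index, $\eta_kr_k$ cannot be controlled by $\eta_kr_k^2$, so the argument would break.

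Choosing downcrossings with $m_j$ the \emph{first} index below $\varepsilon$ avoids this. The excluded endpoint $m_j$ is the one with small $r$, and it does not appear in the sum $\sum_{k=n_j}^{m_j-1}$. So no smallness of $\eta$ is ever needed.
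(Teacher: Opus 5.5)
Your proof is correct and takes essentially the same route as the paper's. Both first establish $\liminf_k r_k=0$, then run a downcrossing argument on windows that start at a high index and stop just before the first subsequent low index, so that $r_k$ stays above the threshold on the window, $\eta_k r_k\le \eta_k r_k^2/\varepsilon$ there, and the noise term is handled by the Cauchy property of $\sum_k\eta_kw_k$. The paper closes with tail-smallness of $\sum_k\eta_kr_k^2$ to get $r_m\le 2\varsigma$ eventually, whereas you show each downcrossing contributes at least $\varepsilon^2/(2L)$ to that series; your uniform quantity $\delta_j$ also treats the noise term slightly more cleanly than the paper's bound anchored at $m_{\bar j}$.
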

\begin{proof}
    If $L=0$, the condition implies $r_k\equiv r$ is constant, and therefore $r_k\equiv 0$, such that the result follows. Now assume $L>0$.
    
    As $\sum_{k=0}^\infty\eta_k=+\infty$ and $\sum_{k=0}^\infty \eta_kr_k^2<+\infty$, it must hold that $\liminf_{k\to\infty}r_k=0$. If $\limsup_{k\to\infty}r_k>0$, there must exist a $\varsigma>0$ such that $r_k>3\varsigma$ infinitely often. Define the sequences $(n_k)$ and $(m_k)$ such that 
    \[
        n_k<m_k<n_{k+1}\quad \text{for all $k\ge 0$}, \quad \text{and} \quad \begin{dcases}
            r_{i}\le \varsigma\quad & \text{if $n_k\le i< m_k$} \\
            r_{i}> \varsigma\quad & {\text{if $m_k\le i<n_{k+1}$}}.
        \end{dcases}
    \]
    Since $\sum_{k=0}^\infty\eta_kr_k^2<+\infty$ and $(\sum_{k=0}^K\eta_kw_k)_K$ converges, for any $\varepsilon>0$, there must exist $\bar j\ge 0$ such that, for all $M\ge m_{\bar j}$, 
    \[
        \sum_{k=m_{\bar j}}^M\eta_kr_k^2< \varepsilon\quad \text{and}\quad \left\|\sum_{k=m_{\bar j}}^M\eta_kw_k\right\|<\varepsilon.
    \]
    Now fix any $m\in [m_j, n_{j+1})$ for some $j\ge \bar j$, such that 
    \begin{align*}
        |r_{m}-r_{n_{j+1}}|
        &\le L\cdot \left(\sum_{k=m}^{n_{j+1}-1}\eta_kr_k+\left\|\sum_{k=m}^{n_{j+1}-1}\eta_kw_k\right\|\right) \\
        &\le L\cdot \left(\varsigma^{-1}\sum_{k=m}^{n_{j+1}-1}\eta_kr_k \varsigma+\varepsilon\right) \\
        &\le L\cdot \left(\varsigma^{-1}\sum_{k=m}^{n_{j+1}-1}\eta_kr_k^2+\varepsilon\right) \\
        &\le L\cdot \left(\varsigma^{-1}\varepsilon+\varepsilon\right).
    \end{align*}
    Picking $\varepsilon=\frac{\varsigma}{L(\varsigma^{-1}+1)}$, we get that 
    \[
        r_m\le r_{n_{j+1}}+|r_m-r_{n_{j+1}}|\le 2\varsigma.
    \]
    This contradicts the assumption that $r_k>3\varsigma$ infinitely often, and hence means that it must hold that $\limsup_{k\to\infty}r_k=0$, as claimed.
\end{proof}

\begin{lemma}\label{lem:rates_contra}
    Let $a>0, b, c, d\ge 0$ and $(u_k)$ be a nonnegative sequence in $\R$ so that, for every $k\ge 0$, 
    \[
        u_{k+1}\le \left(1-\frac{b}{k+a}+\frac{c}{(k+a)^2}\right)u_k+\frac{d}{(k+a)^2}.
    \]
    Then 
    \[
        u_k\le \begin{dcases}
            \mathcal O(k^{-b})&\text{if $b<1$,} \\
            \mathcal O(\log(k)k^{-1})&\text{if $b=1$,} \\
            \mathcal O(k^{-1})&\text{if $b>1$.}
        \end{dcases}
    \]
\end{lemma}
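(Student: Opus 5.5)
The plan is to reduce the recursion to an explicit product-and-sum formula and estimate both pieces with standard integral comparisons. Write $\beta_k=1-\frac{b}{k+a}+\frac{c}{(k+a)^2}$.

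\textbf{Reduction to $\beta_k\ge 0$.} Since $\frac{c}{(k+a)^2}=o\left(\frac{b}{k+a}\right)$ (or vanishes when $b=0$), there is a $k_0$ with $0\le\beta_k\le 1$ for all $k\ge k_0$ when $b>0$. The finitely many initial iterates only affect constants, so we may assume the recursion starts at $k_0$. If $b=0$, then $\prod\beta_k$ is bounded because $\sum (k+a)^{-2}<\infty$, and boundedness is the $\mathcal O(k^{0})$ claim.

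\textbf{Unrolling.} From $k_0$ on, unrolling gives
\[
u_K\le \Big(\prod_{i=k_0}^{K-1}\beta_i\Big)u_{k_0}+\sum_{k=k_0}^{K-1}\frac{d}{(k+a)^2}\prod_{i=k+1}^{K-1}\beta_i .
\]
Using $\ln(1+x)\le x$,
\[
\prod_{i=k+1}^{K-1}\beta_i\le \exp\Big(-b\sum_{i=k+1}^{K-1}\frac{1}{i+a}+c\sum_{i}\frac{1}{(i+a)^2}\Big)\le C\Big(\frac{k+1+a}{K+a}\Big)^{b}.
\]
This uses $\sum_{i=k+1}^{K-1}\frac1{i+a}\ge \ln\frac{K+a}{k+1+a}$ (integral comparison) together with $\sum(i+a)^{-2}<\infty$.

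\textbf{Estimating the sum.} The first term is therefore $\mathcal O(K^{-b})$. The second term is bounded by
\[
C d\,(K+a)^{-b}\sum_{k=k_0}^{K-1}(k+1+a)^{b}(k+a)^{-2}\lesssim K^{-b}\sum_{k<K}(k+1)^{b-2}.
\]
Remark \ref{rem:decreasing_relaxation_parameters} (with exponent $2-b$) gives the three regimes:
\begin{itemize}
\item if $b<1$, the sum is $\Theta(1)$, giving $\mathcal O(K^{-b})$;
\item if $b=1$, the sum is $\Theta(\ln K)$, giving $\mathcal O(\ln(K)K^{-1})$;
\item if $b>1$, the sum is $\Theta(K^{b-1})$, giving $\mathcal O(K^{-1})$.
\end{itemize}
In each case the first term $\mathcal O(K^{-b})$ is dominated by or equal to the stated rate, which concludes.

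\textbf{Main obstacle.} The only delicate point is the product bound. It needs two things:
\begin{itemize}
\item nonnegativity of $\beta_k$ after $k_0$, so that multiplying the inequalities through is legitimate;
\item the fact that the $c/(k+a)^2$ perturbation only contributes a bounded multiplicative constant.
\end{itemize}
Both are handled as above. The rest is routine bookkeeping of the shift $(k+1+a)/(k+a)\le 1+1/a$ inside the constants.
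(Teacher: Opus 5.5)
Your proposal is correct and follows essentially the same route as the paper: restrict to $k\ge k_0$ where the coefficient is nonnegative, unroll the recursion, bound each product by a constant times $\bigl(\frac{j+a}{k+a}\bigr)^{b}$ using $\ln(1+x)\le x$ and an integral comparison for $\sum 1/(i+a)$, absorb the index shift into constants, and read off the three regimes from $\sum_{j<k}(j+a)^{b-2}$. Your separate treatment of $b=0$ is harmless but not needed, since the general argument already covers that case (there $\beta_k\ge 1>0$ for all $k$).
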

\begin{proof}
    Choose $k_0\ge 0$ sufficiently large so that $1-\frac{b}{k+a}+\frac{c}{(k+a)^2}> 0$ for $k\ge k_0$. Define 
    \[
        P_{k, j}=\prod_{i=j}^{k-1}\left(1-\frac{b}{i+a}+\frac{c}{(i+a)^2}\right),
    \]
    so that
    \[
        u_k\le P_{k, k_0}u_{k_0}+d\cdot \sum_{j=k_0}^{k-1}\frac{P_{k, j+1}}{(j+a)^2}.
    \]
    Now note that, as $\log(1+x)\le x$ for $x>-1$, for $j\ge k_0$,
    \[
        \log P_{k, j}\le -b\sum_{i=j}^{k-1}\frac{1}{i+a}+c\sum_{i=j}^{k-1}\frac{1}{(i+a)^2}\le -b\log\left(\frac{k+a}{j+a}\right)+\frac{c\pi^2}{6},
    \]
    so that
    \[
        P_{k, j}\le \exp\left(\frac{c\pi^2}{6}\right)\cdot \left(\frac{j+a}{k+a}\right)^{b}=M\cdot \left(\frac{j+a}{k+a}\right)^{b}.
    \]
    Plugging this back into the recursion gives
    \[
        u_k\le M\cdot \left(\frac{k_0+a}{k+a}\right)^{b}u_{k_0}+dM\cdot \sum_{j=k_0}^{k-1}\frac{\left(\frac{j+1+a}{k+a}\right)^{b}}{(j+a)^2}\le M\cdot \left(\frac{k_0+a}{k+a}\right)^{b}u_{k_0}+\frac{dM}{(k+a)^b}\left(1+\frac1a\right)^b\cdot \sum_{j=k_0}^{k-1}(j+a)^{b-2},
    \]
    which yields the wanted result by considering the cases $b<1$, $b=1$ and $b>1$.
\end{proof}

{
\small
\bibliographystyle{abbrv}
\bibliography{references}
}

\end{document}